\documentclass[runningheads,12pt]{llncs}
\usepackage [english]{babel}
\usepackage[utf8]{inputenc}
\usepackage[T1]{fontenc}

\usepackage{amssymb}
\usepackage{amsmath}
\usepackage{amsfonts}

\usepackage{graphicx}
\usepackage{url}

\usepackage[usenames]{color}

\usepackage{parskip}

\usepackage{xifthen}

  \setcounter{secnumdepth}{3}
  \newtheorem{assumption}{Assumption}

  \usepackage{geometry}
 \geometry{
 a4paper,
 left=25mm,
 right=25mm,
 top=23mm,
 bottom=25mm,
 }

    \newcommand{\keywords}[1]{\par\addvspace\baselineskip  
     \noindent\keywordname\enspace\ignorespaces#1}



\newcommand{\genref}[2]{#2\,\ref{#1}}
\newcommand{\lemref}[1]{\genref{#1}{Lemma}}
\newcommand{\secref}[1]{Section\,\ref{#1}}

\newcommand{\tl}{\|\!|}
\newcommand{\hf}{\p{F}}
\newcommand{\tth}{\tilde{h}}
\newcommand{\hh}{\p{h}}
\newcommand{\qh}{q_h}
\newcommand{\tqh}{\tilde{q}_h}
\DeclareMathOperator{\mdiv}{div}
\DeclareMathOperator{\grad}{\nabla}

\newcommand{\p}[1]{\hat{#1}}
\newcommand{\g}[1]{#1}

\newcommand{\sMP}[1][]{\ifthenelse{\isempty{#1}}{^{(k)}}{^{(#1)}}}

\newcommand{\ifSet}{\iSet_{\mathcal{F}}}
\newcommand{\iSet}{{\mathcal{I}}}

 \newcommand{\MatTwo}[4]
  {\begin{bmatrix}
    #1 & #2\\
    #3 & #4
  \end{bmatrix}}
  
  \newcommand{\VecTwo}[2]
  {\begin{bmatrix}
    #1 \\
    #2 
  \end{bmatrix}}
  
  \newcommand{\ho}[1]{\textcolor{black}{#1}}

\begin{document}

\urldef{\mailsa}\path|christoph.hofer@jku.at|

\title{Analysis of discontinuous Galerkin dual-primal isogeometric tearing and interconnecting methods}
\author{Christoph Hofer$^1$}
\titlerunning{Analysis of dG-IETI-DP}
\authorrunning{C. Hofer}
\institute{ $^1$ Johannes Kepler University (JKU),
				Altenbergerstr. 69, A-4040 Linz, Austria,\\
\mailsa 
 }

\noindent
\maketitle
\begin{abstract}
	In this paper, we present the analysis of the discontinuous Galerkin dual-primal isogeometric tearing and interconnecting method (dG-IETI-DP) 
	for the two-dimensional case where we only consider 
	vertex primal variables.
	The dG-IETI-DP method is a combination of the dual-primal isogeometric tearing and interconnecting method (IETI-DP) with the discontinuous Galerkin (dG) method.  We use the dG method only on the interfaces to couple different patches. This enables us to handle non-matching grids on patch interfaces as well as segmentation crimes (gaps and overlaps) between the patches. The purpose of this paper is to 
	derive
	quasi-optimal  bounds for the condition number of the preconditioned system with respect to the maximal ratio $H/h:=\max_k(H_k/h_k)$ of subdomain diameter and meshsize. We show that the constant  is independent of $h_k$ and $H_k$, but depends on the ratio of meshsizes of neighbouring patches $h_\ell/h_k$.
\end{abstract}
\keywords{Diffusion problems, Isogeometric analysis, IETI-DP, discontinuous Galerkin}

\section{Introduction}
Isogeometric analysis (IgA) is a new methodology for the numerical solution of partial differential equations (PDEs) using the same basis for 
both describing the computational domain and representing the solution. 
IgA was introduced by Hughes, Cottrell and Bazilevs in \cite{HL:HughesCottrellBazilevs:2005a},
and has become a very active field of research,
see also 
\cite{HL:BazilevsVeigaCottrellHughesSangalli:2006a}
for the first results on the numerical analysis of IgA,
the monograph  
\cite{HL:CotrellHughesBazilevs:2009a} 
for a comprehensive presentation of the IgA,
and the recent survey article 
\cite{HL:BeiraodaVeigaBuffaSangalliVazquez:2014a}
on the mathematical analysis of variational isogeometric methods. 
A common choice for basis functions are the so called B-Splines and non-rational uniform B-Splines (NURBS), which are based on a tensor product representation. 
In order to perform local refinements in an efficient way, one has to consider different classes of Splines, e.g., Hierarchical B-Splines (HB-Splines), Truncated HB-Splines (THB-Splines) and T-Splines, see, e.g.,
\cite{HL:VuongGiannelliJuettlerSimeon:2011},
\cite{HL:GiannelliJuettlerSpeleers:2012a}, 
 and \cite{HL:BazilevsCaloCottrellEvans:2010a}, respectively. Moreover, IgA provides a suitable frame for the discretization of a PDE with high-order elements, while having a small number of degrees of freedom.

In the IgA framework, complicated geometries are decomposed into simple domains, called \emph{patches}, which are topologically equivalent to a cube. However, this procedure may introduce small gaps and overlapps at the patch interfaces, leading to so called \emph{segmentation crimes}, see \cite{HL:JuettlerKaplNguyenPanPauley:2014a},
 \cite{HL:PauleyNguyenMayerSpehWeegerJuettler:2015a} and 
 \cite{Hoschek_Lasser_CAD_book_1993} for a more comprehensive analysis. In order to solve PDEs on such domains, numerical schemes based on the discontinuous Galerkin (dG) method for elliptic PDEs 
 were developed and analysed in \cite{HL:HoferLangerToulopoulos:2016a}, \cite{HL:HoferToulopoulos:2016a} and \cite{HL:HoferLangerToulopoulos:2016b}. Moreover, the dG formulation is used when considering different B-Splines spaces across interfaces, e.g., non-matching grids or different spline degrees. An analysis of the dG-IgA formulation with extensions to low regularity solutions can be found in \cite{HL:LangerToulopoulos:2015a}. For a detailed discussion of dG methods, we refer, e.g., to \cite{HL:Riviere:2008a} and \cite{HL:PietroErn:2012a}.
 
 In this paper, we consider fast solution techniques for the system of linear equations arising from the IgA discretization of an elliptic PDE. Our approach is based on the tearing and interconnecting technology, which can be interpreted as a divide and conquer algorithm. We consider the adaptation of the dual-primal finite element tearing and interconnecting (FETI-DP) to the IgA framework, called dual-primal isogeometric tearing and interconnecting (IETI-DP), established in \cite{HL:KleissPechsteinJuettlerTomar:2012a}.  To be more precise, since we use the dG method to couple the different patches we consider its adaption to the dG-IgA formulation, introduced in \cite{HL:HoferLanger:2016a} and denoted by dG-IETI-DP. An application of the dG-IETI-DP method to domains with small gaps and overlaps can be found in \cite{HL:HoferLangerToulopoulos:2016b}. For a comprehensive study and theoretical analysis of FETI-DP and the equivalent Balancing Domain Decomposition by Constraints (BDDC) method, we refer to \cite{HL:ToselliWidlund:2005a}, \cite{HL:Pechstein:2013a} and references therein. The first analysis for the IETI-DP method was done in \cite{HL:VeigaChoPavarinoScacchi:2013a} and extended in \cite{HL:HoferLanger:2016b}. The combination of the FETI-DP method and dG on the interfaces was first introduced and analysed in \cite{HL:DryjaGalvisSarkis:2013a} and \cite{HL:DryjaSarkis:2014a}, see also \cite{HL:DryjaGalvisSarkis:2007a} for an analysis of the corresponding BDDC preconditioner.\\
 Moreover, we refer to other types of efficient solver for IgA systems. We mention overlapping Schwarz methods, see, e.g., \cite{HL:VeigaChoPavarinoScacchi:2012a}, \cite{HL:VeigaChoPavarinoScacchi:2013b},  \cite{HL:BercovierSoloveichik:2015}, and isogeometric mortaring discretizations, see \cite{HL:HeschBetsch:2012a}. In particular, we want to highlight recent advances in multigrid methods for IgA in \cite{HL:HofreitherTakacs:2016a}. There a smoother is constructed based on a stable splitting of the spline space leading to a multigrid method, which is robust with respect to the spline degree in arbitrary dimensions. \\
 The purpose of this paper is to present the analysis for the dG-IETI-DP method. Our proof follows the structure presented in \cite{HL:VeigaChoPavarinoScacchi:2013a} and  \cite{HL:HoferLanger:2016b}.
 We note that, in the analysis, we restrict ourselves to two-dimensional domains having only vertex primal variables,  homogeneous diffusion coefficient and consider only the case of coefficient scaling. Let $\Omega\sMP$ be a patch of the computational domain $\Omega$, $H_k$ be its diameter and $h_k$ characteristic meshsize. We can show that the condition number of the preconditioned system is bounded by $O((1+\log(H/h))^2 q_h^2)$, where $H/h:= \max_k(H_k/h_k)$, $q_h:=\max_{k,\ell}q(h_\ell/h_k)$ with $q(z)=(z+z^2)$ and the hidden constant is independent of $H_k$ and $h_k$. We obtain a quasi-optimal condition number bound with respect to $H/h$ and polynomial bound with respect to the ratio of mesh sizes $h_\ell/h_k$. The quantity $q_h$ in the final theorem   only needs to take into account the ratio of neighbouring meshsizes. The framework used in \cite{HL:VeigaChoPavarinoScacchi:2013a} and  \cite{HL:HoferLanger:2016b} holds for the BDDC preconditioner. Since the BDDC preconditioner and the FETI-DP method have the same spectrum, see \cite{HL:MandelDohrmannTezaur:2005a}, the result applies also to the corresponding IETI-DP method. 
 
 In the present paper, we consider the following 
second-order elliptic boundary value problem 
in a bounded Lipschitz domain $\Omega\subset \mathbb{R}^2$,
as a typical model problem:
Find $u: \overline{\Omega} \rightarrow \mathbb{R}$ such that\\ 
\begin{equation}
  \label{equ:ModelStrong}
  - \mdiv(\alpha \grad u) = f \; \text{in } \Omega,\;
  u = 0 \; \text{on } \Gamma_D,  \;\text{and}\;
  \alpha \frac{\partial u}{\partial n} = g_N \; \text{on } \Gamma_N,
\end{equation}
with given, sufficient smooth data $f, g_N \text{ and } \alpha = \mbox{const} > 0$.
The boundary 
$\partial \Omega$ 
of the computational domain $\Omega$
consists of a Dirichlet part $\Gamma_D$ of positive boundary measure 
and a Neumann part $\Gamma_N$.
Furthermore, we assume that the Dirichlet boundary $\Gamma_D$ is always 
a union of complete domain sides (edges in 2D) 
which are uniquely defined in IgA.
Without loss of generality, we assume 
homogeneous
Dirichlet conditions. 
This can always be  obtained by homogenization.
By means of integration by parts, 
we arrive at
the weak formulation of \eqref{equ:ModelStrong} 
which reads as follows:
Find $u \in V_{D} = \{ u\in H^1: \gamma_0 u = 0 \text{ on } \Gamma_D \}$
such that
\begin{align}
  \label{equ:ModelVar}
    a(u,v) = \left\langle F, v \right\rangle \quad \forall v \in V_{D},
\end{align}
where $\gamma_0$ denotes the trace operator. The bilinear form
$a(\cdot,\cdot): V_{D} \times V_{D} \rightarrow \mathbb{R}$
and the linear form $\left\langle F, \cdot \right\rangle: V_{D} \rightarrow \mathbb{R}$
are given by the expressions
\begin{equation*}
a(u,v) := \int_\Omega \alpha \nabla u \nabla v \,dx
\quad \mbox{and} \quad
\left\langle F, v \right\rangle := \int_\Omega f v \,dx + \int_{\Gamma_N} g_N v \,ds.
\end{equation*}
The remainder of the paper is organized as follows. In \secref{sec:dg_scheme}, we recall the notion introduced in \cite{HL:HoferLanger:2016a} and formulate the dG-IETI-DP method. \secref{sec:Preliminary_results} and \secref{sec:cond_bound} are the main sections of this paper. \secref{sec:Preliminary_results} covers some preliminary theoretical results and introduces the required technical notation. In \secref{sec:cond_bound}, we apply the abstract framework to this problem and obtain the condition number bound for the preconditioned system. Finally, in \secref{sec:conclusion} we draw some conclusions.
\section{Discontinuous Galerkin for Isogeometric Analysis}
In this section we give a very short overview about IgA and dG for IgA. For a more comprehensive study, we refer to, e.g., \cite{HL:CotrellHughesBazilevs:2009a} and \cite{HL:LangerToulopoulos:2015a}.

Let $\p{\Omega}:=(0,1)^d$, where $d\in\{2,3\}$, be the d-dimensional unit cube, which we refer to as the \emph{parameter domain}. Let $p_\iota$ and $M_\iota,\iota\in\{1,\ldots,d\}$, be the B-Spline degree and the number of basis functions along in $x_\iota$-direction. Moreover, let $\Xi_\iota = \{\xi_1=0,\xi_2,\ldots,\xi_{n_\iota}=1\}$, $n_\iota=M_\iota-p_\iota-1$, be a partition of $[0,1]$, called \emph{knot vector}. With this ingredients we are able to define the B-Spline basis $\p{N}_{i,p}$, $i\in\{1,\ldots,M_\iota\}$ on $(0,1)$ via Cox-De Boor's algorithm, cf. \cite{HL:CotrellHughesBazilevs:2009a}. The generalization to $\p{\Omega}$ is performed by a tensor product, again denoted by $\p{N}_{i,p}$, where $i=(i_1,\ldots,i_d)$ and $p=(p_1,\ldots,p_d)$ are a multi-indices. For notational simplicity, we define 
	$\iSet:= \{(i_1,\ldots,i_d)\,|\,i_\iota \in \{1,\ldots,M_\iota\}\} $ as the set of multi-indices.
Since the tensor product knot vector $\Xi$ provides a partition of $\p{\Omega}$, it introduces a mesh $\p{\mathcal{Q}}$, and we denote a mesh element by $\p{Q}$, called \emph{cell}.

The B-Spline functions are used to define our computational domain  $\Omega$, also called \emph{physical domain}. It is given as image of the \emph{geometrical mapping} $G :\; \p{\Omega} \rightarrow \mathbb{R}^{{d}}$, defined as
\begin{align*}
	    G(\xi) := \sum_{i\in \mathcal{I}} P_i \p{N}_{i,p}(\xi),
\end{align*}
with the control points $P_i \in \mathbb{R}^{{d}}$, $i\in \mathcal{I}$. The image of the mesh $\p{\mathcal{Q}}_h$ under $G$ defines the mesh on $\Omega$, denoted by $\mathcal{Q}_h$ with cells $Q$. Both meshes possess a characteristic mesh size $\p{h}$ and $h$, respectively. More complicated geometries $\Omega$ have to be represented with multiple non-overlapping domains $\Omega\sMP:=G\sMP(\p{\Omega}),k=1,\ldots,N$, called \emph{patches}, where each patch is associated with a different geometrical mapping $G\sMP$. We sometimes call $\overline{\Omega}:=\bigcup_{k=1}^N\overline{\Omega}\sMP$ a \emph{multipatch domain}. Furthermore, we denote the set of all indices $\ell$  
such that $\Omega^{(k)}$ and $\Omega^{(\ell)}$ have a common interface $F^{(k\ell)}$ by ${\mathcal{I}}_{\mathcal{F}}^{(k)}$.
We define the interface $\Gamma\sMP$ of $\Omega\sMP$ as $\Gamma\sMP := \bigcup_{\ell\in\ifSet\sMP}^N F^{(k\ell)}$.

The B-Splines are also used for approximating the solution of our PDE. This motivates to define the basis functions in the physical space as $\g{N}_{i,p}:=\p{N}_{i,p}~\circ~G^{-1}$ and the corresponding discrete space as
\begin{align}
\label{equ:gVh}
  V_h:=\text{span}\{\g{N}_{i,p}\}_{i\in\iSet}.
\end{align}
Moreover, each function $u(x) = \sum_{i\in\mathcal{I}} u_i \g{N}_{i,p}(x)$ 
is associated with the 
 coefficient  vector $\boldsymbol{u} = (u_i)_{i\in\mathcal{I}}$. 
This map is known as \emph{Ritz isomorphism} 
or \emph{IgA isomorphism} in connection with IgA, 
One usually writes this relation as 
$u_h \leftrightarrow \boldsymbol{u}$,
 and we will use it in the following without further comments. If we consider a single patch $\Omega^{(k)}$ of a multipatch domain $\Omega$, we will use the notation $V_{h}^{(k)},\g{N}_{i,p}^{(k)},\p{N}_{i,p}^{(k)}, G^{(k)}, \ldots$ with the analogous definitions. To keep notation simple, we will use $h_k$ and $  \p{h}_k$ instead of $h\sMP$ and $\p{h}\sMP$, respectively.

In this paper we consider the dG-IgA scheme, where we use the 
spaces $V_{h}^{(k)}$ of continuous functions on each patch $\Omega^{(k)}$, 
whereas 
discontinuities are allowed 
across the patch interfaces.
The continuity of the function values and its normal fluxes are enforced in a weak sense by adding additional terms to the bilinear form.
For the remainder of this paper, we define
the dG-IgA space
\begin{align}
\label{equ:gVh_glob}
  V_{h}:= V_{h}(\Omega) := \{v\,| \,v|_{\Omega^{(k)}}\in V_{h}^{(k)}\},
\end{align}
where $V_{h}^{(k)}$ is defined as in \eqref{equ:gVh}. 
A comprehensive study of dG schemes for FE can be found in \cite{HL:Riviere:2008a} and \cite{HL:PietroErn:2012a}. For an analysis of the dG-IgA scheme, we refer to \cite{HL:LangerToulopoulos:2015a}.

For simplicity of the presentation, we assume that we have homogeneous Dirichlet boundary condition. Hence, we define $V_{D,h}$ as the space of all functions from $V_{h}$ which vanish on the Dirichlet boundary $\Gamma_D$. Having these definitions at hand, we can define the discrete problem based on the Symmetric Interior Penalty (SIP) dG formulation as follows:
Find $u_h \in V_{D,h}$ such that
  \begin{align}
  \label{equ:ModelDiscDG}
    a_h(u_h,v_h) = \left\langle F, v_h \right\rangle \quad \forall v_h \in V_{D,h},
  \end{align}
where
\begin{align*}
    a_h(u,v) &:= \sum_{k=1}^N a_e^{(k)}(u,v) \quad \text{and} \quad \left\langle F, v \right\rangle:=\sum_{k=1}^N \left(\int_{\Omega^{(k)}}f v^{(k)} dx+\int_{\Gamma_N\sMP} g_N v\sMP \,ds\right),\\
    a^{(k)}_e(u,v) &:=  a^{(k)}(u,v) + s^{(k)}(u,v) + p^{(k)}(u,v),
\end{align*}
and
\begin{align*}
a^{(k)}(u,v) &:= \int_{\Omega^{(k)}}\alpha^{(k)} \nabla u^{(k)} \nabla v ^{(k)} dx,\\
    s^{(k)}(u,v)&:= \sum_{\ell\in{\mathcal{I}}_{\mathcal{F}}^{(k)}} \int_{F^{(k\ell)}}\frac{\alpha^{(k)}}{2}\left(\frac{\partial u^{(k)}}{\partial n}(v^{(\ell)}-v^{(k)})+ \frac{\partial v^{(k)}}{\partial n}(u^{(\ell)}-u^{(k)})\right)ds,\\ 
    p^{(k)}(u,v)&:= \sum_{\ell\in{\mathcal{I}}_{\mathcal{F}}^{(k)}} \int_{F^{(k\ell)}}\frac{\delta \alpha^{(k)}}{h_{k\ell}}(u^{(\ell)}-u^{(k)})(v^{(\ell)}-v^{(k)})\,ds.
\end{align*}
The notation $\frac{\partial}{\partial n}$ means the derivative 
in the direction of the outer normal vector,
$\delta$ is a positive sufficiently large penalty parameter,  and $h_{k\ell}$ is the harmonic average of the adjacent mesh sizes, i.e., $h_{k\ell}= 2h_k h_\ell/(h_k + h_\ell)$.

We equip $V_{D,h}$ with the dG-norm 
\begin{align}
\label{HL:dgNorm}
 \left\|u\right\|_{dG}^2  = \sum_{k = 1}^N\left[\alpha^{(k)} \left\|\nabla u^{(k)}\right\|_{L^2(\Omega^{(k)})}^2 + \sum_{\ell\in{\mathcal{I}}_{\mathcal{F}}^{(k)}} \frac{\delta \alpha^{(k)}}{h_{k\ell}}\int_{F^{(k\ell)}} (u^{(k)} - u^{(\ell)})^2 ds\right].
\end{align}

Furthermore,
we define the bilinear forms
\begin{align*}
 d_h(u,v) = \sum_{k =1}^N d^{(k)}(u,v) \quad \text{where} \quad d^{(k)}(u,v)= a_e^{(k)}(u,v) + p^{(k)}(u,v),
\end{align*}
for later use.
We note that $\left\|u_h\right\|_{dG}^2 = d_h(u_h,u_h)$.
%
\begin{lemma}
\label{lem:wellPosedDg}
Let $\delta$ be sufficiently large. 
Then there exist two positive constants $\gamma_0$ and $\gamma_1$ 
which are independent of $h_k,H_k,\delta,\alpha^{(k)}$ and $u_h$
such that the inequalities
 \begin{align}
 \label{equ:equivPatchNormdG}
  \gamma_0 d^{(k)}(u_h,u_h)\leq a_e^{(k)}(u_h,u_h) \leq \gamma_1 d^{(k)}(u_h,u_h), \quad 
  \forall u_h\in V_{D,h}
 \end{align}
are valid for all $k=1,2,\ldots,N$. Furthermore, we have  the inequalities
\begin{align}
 \label{equ:equivNormdG}
 \gamma_0 \left\|u_h\right\|_{dG}^2\leq a_h(u_h,u_h)\leq \gamma_1 \left\|u_h\right\|_{dG}^2, \quad 
  \forall u_h\in V_{D,h}.
\end{align}

\end{lemma}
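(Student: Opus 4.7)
The plan is to establish the patchwise equivalence \eqref{equ:equivPatchNormdG} first, and then obtain the global statement \eqref{equ:equivNormdG} by summing over patches. The key observation is that $a_e^{(k)}(u_h,u_h)$ and $d^{(k)}(u_h,u_h)$ differ only by a single copy of the non-negative penalty term $p^{(k)}(u_h,u_h)$, so it suffices to prove that both are spectrally equivalent to the coercive auxiliary quantity $a^{(k)}(u_h,u_h)+p^{(k)}(u_h,u_h)$; everything then reduces to estimating the sign-indefinite consistency term $s^{(k)}(u_h,u_h)$.

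On each interface $F^{(k\ell)}$ I would split the integrand of $s^{(k)}(u_h,u_h)$ as the product of $\sqrt{\alpha^{(k)} h_{k\ell}/\delta}\,\partial u_h^{(k)}/\partial n$ and $\sqrt{\delta\alpha^{(k)}/h_{k\ell}}\,(u_h^{(\ell)}-u_h^{(k)})$, apply Cauchy--Schwarz face-by-face, and invoke the discrete trace (inverse) inequality for B-spline functions,
\[
  \left\|\partial u_h^{(k)}/\partial n\right\|_{L^2(F^{(k\ell)})}^2 \leq C_{\mathrm{tr}}\, h_k^{-1}\left\|\nabla u_h^{(k)}\right\|_{L^2(\Omega^{(k)})}^2,
\]
which is standard in IgA under mild regularity assumptions on $G^{(k)}$ and in which $C_{\mathrm{tr}}$ depends only on the spline degree. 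Since the harmonic average satisfies $h_{k\ell}\leq 2h_k$, the $h$-factors cancel cleanly, and a Young inequality then yields, for any $\varepsilon>0$,
\[
  |s^{(k)}(u_h,u_h)| \leq \varepsilon\, a^{(k)}(u_h,u_h) + \frac{C_{\mathrm{tr}}^2}{\varepsilon\,\delta}\, p^{(k)}(u_h,u_h).
\]

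Fixing $\varepsilon=1/2$ and requiring $\delta$ large enough that $C_{\mathrm{tr}}^2/(\varepsilon\delta)\leq 1/2$ produces two-sided bounds of both $a_e^{(k)}(u_h,u_h)$ and $d^{(k)}(u_h,u_h)$ by $a^{(k)}(u_h,u_h)+p^{(k)}(u_h,u_h)$ with explicit constants, from which \eqref{equ:equivPatchNormdG} follows with $\gamma_0,\gamma_1$ depending only on $C_{\mathrm{tr}}$ and the chosen lower threshold for $\delta$ --- in particular independent of $h_k$, $H_k$, $\alpha^{(k)}$ and $u_h$. Summation over $k=1,\ldots,N$ together with the identifications $a_h(u_h,u_h)=\sum_k a_e^{(k)}(u_h,u_h)$ and $\|u_h\|_{dG}^2=\sum_k\bigl(a^{(k)}(u_h,u_h)+p^{(k)}(u_h,u_h)\bigr)$ (each interface integral in \eqref{HL:dgNorm} is exactly captured once per adjacent patch by the patchwise $p^{(k)}$) then delivers \eqref{equ:equivNormdG}.

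The main technical point is invoking the discrete trace inequality with a constant $C_{\mathrm{tr}}$ that depends only on the spline degree and on the mapping $G^{(k)}$ and not on $h_k$; this is well documented in the IgA analysis literature and may be cited from the references listed in the introduction. Beyond that, the argument is essentially the classical SIPG equivalence proof: Cauchy--Schwarz plus Young bookkeeping.
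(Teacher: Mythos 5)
Your proof is correct and follows essentially the same route as the proof the paper defers to (Lemma~2.1 of \cite{HL:DryjaGalvisSarkis:2013a} and its IgA version in \cite{HL:HoferLanger:2016a}): Cauchy--Schwarz on the consistency term $s^{(k)}$ with the weights $\sqrt{h_{k\ell}/\delta}$ and $\sqrt{\delta/h_{k\ell}}$, the inverse trace inequality for splines with $h_{k\ell}\le 2h_k$ to cancel the mesh factors, Young's inequality, and a fixed threshold on $\delta$ so that $\gamma_0,\gamma_1$ stay independent of $\delta$. Your identification $\|u_h\|_{dG}^2=\sum_k\bigl(a^{(k)}+p^{(k)}\bigr)(u_h,u_h)$ is the right reading of $d^{(k)}$ (the paper's displayed definition $d^{(k)}=a_e^{(k)}+p^{(k)}$ is evidently a typo for $a^{(k)}+p^{(k)}$, as the stated identity $\|u_h\|_{dG}^2=d_h(u_h,u_h)$ requires), and your bound on $s^{(k)}$ yields the equivalence for either reading, so there is no gap.
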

This Lemma is an equivalent statement of Lemma 2.1 in \cite{HL:DryjaGalvisSarkis:2013a} for IgA,
and the proof can be found in \cite{HL:HoferLanger:2016a}. A direct implication of \eqref{equ:equivNormdG} is the well posedness of the discrete problem \eqref{equ:ModelDiscDG} by the Theorem of Lax-Milgram. The consistency of the method together with interpolation estimates for B-splines lead to an a-priori error estimate, established in \cite{HL:LangerToulopoulos:2015a}.
%
%
We note that, in \cite{HL:LangerToulopoulos:2015a}, the results were obtained 
for the Incomplete Interior Penalty (IIP) scheme. 
An extension to SIP-dG and the use of  harmonic averages for $h$ and/or $\alpha$ 
are discussed in Remark~3.1 in \cite{HL:LangerToulopoulos:2015a},
see also \cite{HL:LangerMantzaflarisMooreToulopoulos:2015b}.

We choose the B-Spline functions $\{\g{N}_{i,p}\}_{i\in\mathcal{I}_0}$ as basis for the space $V_{h}$, see \eqref{equ:gVh_glob}, where $\mathcal{I}_0$ contains all indices of $\mathcal{I}$, where the corresponding B-Spline basis functions do not have a support on the Dirichlet boundary. Hence, the dG-IgA scheme $\eqref{equ:ModelDiscDG}$ is equivalent to the system of linear equations
\begin {align}
\label{equ:Ku=f_DG}
  \boldsymbol{K} \boldsymbol{u} = \boldsymbol{f},
\end{align}
where 
$\boldsymbol{K} = (\boldsymbol{K}_{i,j})_{i,j\in {\mathcal{I}}_0}$
and
$\boldsymbol{f}= (\boldsymbol{f}_i)_{i\in {\mathcal{I}}_0}$
denote the stiffness matrix and the load vector, respectively,
with 
$ \boldsymbol{K}_{i,j} = a(\g{N}_{j,p},\g{N}_{i,p})$
and 
$\boldsymbol{f}_i = \left\langle F, \g{N}_{i,p} \right\rangle$, and
$\boldsymbol{u}$ is the vector representation of $u_h$. 
\section{IETI-DP for dG-IgA}
\label{sec:dg_scheme}
In this section we rephrase the main ingredients for the dG-IETI-DP method in two dimensions and provide definitions used in the analysis in \secref{sec:Preliminary_results} and \secref{sec:cond_bound}. A more sophisticated presentation of the method can be found in \cite{HL:HoferLanger:2016a}. 
\subsection{Basic setup and local space description}
\label{sec:LocSpaces}
 In order to keep the presentation  simple, we assume that the considered patch $\Omega^{(k)}$ does not touch the Dirichlet boundary. The other case can be handled in an analogous way.  Note, although $ \overline{F}_{\ell k} \subset \partial\Omega^{(\ell)}$ and  $\overline{F}_{k l}\subset \partial\Omega^{(k)}$ are geometrically the same, they are treated as different objects.
 
 
For each patch $\Omega^{(k)}$, we define its extended version $\Omega^{(k)}_e$ via the union with all neighbouring interfaces $\overline{F}_{\ell k}\subset\partial\Omega^{(\ell)}$  and similarly we introduce also the extended interface $\Gamma^{(k)}_e$:
\begin{align*}
	\overline{\Omega}^{(k)}_e := \overline{\Omega}^{(k)} \cup \{\bigcup_{\ell\in{\mathcal{I}}_{\mathcal{F}}^{(k)}} \overline{F}^{(\ell k)}\}, \quad \Gamma^{(k)}_e := \Gamma^{(k)} \cup \{\bigcup_{\ell\in{\mathcal{I}}_{\mathcal{F}}^{(k)}} \overline{F}^{(\ell k)}\}.
\end{align*}
%
%
%
Moreover, based on the definitions above, we introduce the following quantities for the whole multipatch domain
\begin{align*}
\overline{\Omega}_e := \bigcup_{k=1}^{N} \overline{\Omega}_e^{(k)}, \quad  \Gamma := \bigcup_{k = 1}^N \Gamma^{(k)} \text{ and}\quad \Gamma_e := \bigcup_{k = 1}^N\Gamma^{(k)}_e.
\end{align*}
The next step is to describe appropriate discrete function spaces to reformulate \eqref{equ:ModelDiscDG} in order to treat the new formulation in the spirit of the (classical) IETI-DP method.  We start with a description of the discrete function spaces for a single patch. 

As defined in $\eqref{equ:gVh}$, let $V_{h}^{(k)}$ be the discrete function space 
defined on the patch $\Omega^{(k)}$. 
Then we define the corresponding function space for the extended patch $\Omega^{(k)}_e$ by
\begin{align*}
  V_{h,e}^{(k)} := V_{h}^{(k)} \times \prod_{\ell\in{\mathcal{I}}_{\mathcal{F}}^{(k)}}V_{h}^{(k)}(\overline{F}^{(\ell k)}),
\end{align*}
where $V_{h}^{(k)}(\overline{F}^{(\ell k)}) \subset V_{h}^{(\ell)}$ is given by
\begin{align*}
 V_{h}^{(k)}(\overline{F}^{(\ell k)}) := \text{span}\{\g{N}_{i,p}^{(\ell)} \,|\, \text{supp}\{\g{N}_{i,p}^{(\ell)}\}\cap\overline{F}^{(\ell k)} \neq \emptyset\}. 
 \end{align*}
According to the notation introduced in \cite{HL:DryjaGalvisSarkis:2013a}, we will represent a function $u^{(k)} \in V_{h,e}^{(k)} $ as
\begin{align}
 u^{(k)} = \{u\sMP[k,k], \{u\sMP[k,\ell]\}_{\ell\in {\mathcal{I}}_{\mathcal{F}}^{(k)}}\},
\end{align}
where $u\sMP[k,k]$ and $u\sMP[k,\ell]$  are the restrictions of $u^{(k)}$ to $\Omega^{(k)}$ and $\overline{F}^{(\ell k)}$, respectively. By introducing a suitable ordering, a function $u\sMP\in V_{h,e}$ possesses a vector representation $\boldsymbol{u}\sMP = (u_i\sMP)_{i\in\iSet\sMP_e}$. Moreover, we introduce an additional representation of $u^{(k)}\in V_{h,e}^{(k)} $, as $u^{(k)} = (u^{(k)}_I, u^{(k)}_{B_e})$, where 
\begin{align*}
 u^{(k)}_I\in V_{I,h}^{(k)}:=V_{h}^{(k)} \cap H^1_0(\Omega^{(k)}),
\end{align*}
and
\begin{align*}
  u^{(k)}_{B_e} \in W^{(k)}:=\text{span}\{\g{N}_{i,p}^{(\ell)}\,|\, \,\text{supp}\{\g{N}_{i,p}^{(\ell)}\}\cap\Gamma^{(k)}_e \neq \emptyset \text{ for } \ell\in{\mathcal{I}}_{\mathcal{F}}^{(k)}\cup\{k\}\}.
\end{align*}
This provides a representation of $V_{h,e}^{(k)} $ 
in the form of
$V_{I,h}^{(k)} \times W^{(k)}$. 

\subsection{ Schur complement and discrete harmonic extensions}
\label{sec:DHE_Schur}

We note that the patch local bilinear form $a^{(k)}_e(\cdot,\cdot)$ is defined on the space $V_{h,e}^{(k)}\times V_{h,e}^{(k)}$, since it requires function values of the neighbouring patches $\Omega^{(\ell)},\ell\in{\mathcal{I}}_{\mathcal{F}}^{(k)}$. Therefore, it depicts a matrix representation $\boldsymbol{K}_e^{(k)}$ 
satisfying the identity
\begin{align*}
 a^{(k)}_e(u^{(k)},v^{(k)}) = (\boldsymbol{K}_e^{(k)} \boldsymbol{u},\boldsymbol{v})_{l_2}\quad \text{for } u^{(k)},v^{(k)} \in V_{h,e}^{(k)},
\end{align*}
where $\boldsymbol{u}$ and $\boldsymbol{v}$ denote the  vector representation of $u^{(k)}$ and $v^{(k)}$, respectively. By means of the representation $V_{I,h}^{(k)} \times W^{(k)}$ for $V_{h,e}^{(k)} $, we can structure the matrix $\boldsymbol{K}^{(k)}_e$ in the following way
\begin{align}
\label{equ:loc_K}
 \boldsymbol{K}^{(k)}_e=
 \begin{bmatrix}
  \boldsymbol{K}^{(k)}_{e,II} & \boldsymbol{K}^{(k)}_{e,IB_e} \\
  \boldsymbol{K}^{(k)}_{e,B_e I} & \boldsymbol{K}^{(k)}_{e,B_e B_e }
 \end{bmatrix}.
\end{align}
This 
enables 
us to define the Schur complement of $\boldsymbol{K}^{(k)}_e$ with respect to $W^{(k)}$ as
\begin{align}
\label{equ:loc_Schur}
 \boldsymbol{S}^{(k)}_e := \boldsymbol{K}^{(k)}_{e,B_e B_e } - \boldsymbol{K}^{(k)}_{e,B_e I}\left(\boldsymbol{K}^{(k)}_{e,II}\right)^{-1}\boldsymbol{K}^{(k)}_{e,IB_e}.
\end{align}
We denote 
the corresponding bilinear form 
by $s^{(k)}_e(\cdot,\cdot)$,
and 
the corresponding operator
by $S^{(k)}_e: W^{(k)}\to{W^{(k)}}^*$, i.e.
\begin{align*}
 (\boldsymbol{S}^{(k)}_e \boldsymbol{u}_{B_e}^{(k)},\boldsymbol{v}_{B_e}^{(k)})_{l^2} = \langle S^{(k)}_e u_{B_e}^{(k)},v_{B_e}^{(k)} \rangle = s^{(k)}_e(u_{B_e}^{(k)},v_{B_e}^{(k)}), \quad \forall u_{B_e}^{(k)},v_{B_e}^{(k)}\in W^{(k)}.
\end{align*}
The Schur complement has the property that
\begin{align}
\label{equ:SchurMin}
 \langle S^{(k)}_e u_{B_e}^{(k)},u_{B_e}^{(k)}\rangle = \min_{w^{(k)}= (w_I^{(k)},w_{B_e}^{(k)})\in V_{h,e}^{(k)}} a^{(k)}_e(w^{(k)},w^{(k)}),
\end{align}
such that $w_{B_e}^{(k)} = u_{B_e}^{(k)}$ on $\Gamma^{(k)}_e$. 
We define the \emph{discrete NURBS harmonic extension} $\mathcal{H}^{(k)}_e$ (in the sense of $a^{(k)}_e(\cdot,\cdot)$) for patch $\Omega^{(k)}_e$ by
\begin{align}
\label{def:DHEext_loc}
\begin{split}
 \mathcal{H}^{(k)}_e&: W^{(k)} \to V_{h,e}^{(k)}:\\
&\begin{cases}
 \text{Find }\mathcal{H}^{(k)}_e{u_{B_e}}\in V_{h,e}^{(k)}: & \\
  \quad a^{(k)}_e(\mathcal{H}^{(k)}_e{u_{B_e}},u^{(k)})=0 \quad &\forall u^{(k)}\in V_{I,h}^{(k)},\\
   \quad \mathcal{H}^{(k)}_e{u_{B_e}}_{|\Gamma_e^{(k)}} = {u_{B_e}}_{|\Gamma_e^{(k)}}, &
\end{cases}
\end{split}
\end{align}
where 
$V_{I,h}^{(k)}$ 
is here interpreted as subspace of $V_{h,e}^{(k)}$ with vanishing function values on $\Gamma_e^{(k)}$. One can show that the minimizer in \eqref{equ:SchurMin} is given by $\mathcal{H}^{(k)}_e{u_{B_e}}$. 
In addition, we introduce the \emph{standard discrete NURBS harmonic extension} $\mathcal{H}^{(k)}$ (in the sense of $a^{(k)}(\cdot,\cdot)$) of $u^{(k)}_{B_e}$ as follows:
\begin{align}
\label{def:DHE_loc}
\begin{split}
 \mathcal{H}^{(k)}&: W^{(k)} \to V_{h,e}^{(k)}:\\
&\begin{cases}
 \text{Find }\mathcal{H}^{(k)}{u_{B_e}}\in V_{h,e}^{(k)}: & \\
  \quad a^{(k)}(\mathcal{H}^{(k)}{u_{B_e}},u^{(k)})=0 \quad &\forall u^{(k)}\in V_{I,h}^{(k)},\\
   \quad \mathcal{H}^{(k)}{u_{B_e}}_{|\Gamma_e^{(k)}} = {u_{B_e}}_{|\Gamma_e^{(k)}}, &
\end{cases}
\end{split}
\end{align}
where $V_{I,h}^{(k)}$ is the same space as in \eqref{def:DHEext_loc}, 
and $a^{(k)}(\cdot,\cdot)$ 
is interpreted as a
bilinear form on the space 
$V_{h,e}^{(k)}\times V_{h,e}^{(k)}$. 
The crucial point
is to show equivalence in the energy norm $d_h(u_h,u_h)$ between functions, 
which are discrete harmonic in the sense of $\mathcal{H}^{(k)}_e$ and $\mathcal{H}^{(k)}$. 
This property is summarized in the following Lemma, 
cf. also Lemma~3.1 in \cite{HL:DryjaGalvisSarkis:2013a}.
%
%
\begin{lemma}
\label{lem:equDiscHarmonic}
There exists a positive constant 
which  is independent of $\delta, h_k, H_k, \alpha^{(k)}$ and $u_{B_e}^{(k)}$
such that the inequalities
 \begin{align}
  d^{(k)}(\mathcal{H}^{(k)}{u_{B_e}},\mathcal{H}^{(k)}{u_{B_e}})\leq d^{(k)}(\mathcal{H}_e^{(k)}{u_{B_e}},\mathcal{H}_e^{(k)}{u_{B_e}})\leq C d^{(k)}(\mathcal{H}^{(k)}{u_{B_e}},\mathcal{H}^{(k)}{u_{B_e}}),
 \end{align}
hold for all $u_{B_e}^{(k)}\in W^{(k)}$.
\end{lemma}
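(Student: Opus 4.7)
The plan is to combine a Dirichlet-principle argument for the left-hand inequality with a perturbation estimate for the right-hand inequality. The pivotal structural remark is that any test function $w\in V_{I,h}^{(k)}$ has zero trace on all of $\Gamma_e^{(k)}$, so its jump $w^{(\ell)}-w^{(k)}$ vanishes on every interface $F^{(k\ell)}$. Consequently $p^{(k)}(\cdot,w)\equiv 0$ on $V_{I,h}^{(k)}$, and the piece of $s^{(k)}(\cdot,w)$ containing $w^{(\ell)}-w^{(k)}$ also disappears. These cancellations identify $\mathcal{H}^{(k)}u_{B_e}$ as a $d^{(k)}$-harmonic extension, so by Dirichlet's principle (non-negativity of $d^{(k)}$ being granted by \lemref{lem:wellPosedDg}) it is the $d^{(k)}$-minimiser among all extensions in $V_{h,e}^{(k)}$ with prescribed trace $u_{B_e}$. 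Inserting $\mathcal{H}_e^{(k)}u_{B_e}$ as a competitor in this minimum principle produces the left inequality with constant $1$.

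For the right-hand inequality, set $e := \mathcal{H}_e^{(k)}u_{B_e}-\mathcal{H}^{(k)}u_{B_e}\in V_{I,h}^{(k)}$ and subtract the two defining orthogonality identities. After the above cancellations what remains reads
\[
 a^{(k)}(e,w) \;=\; -\sum_{\ell\in\ifSet\sMP}\int_{F^{(k\ell)}}\tfrac{\alpha^{(k)}}{2}\,\tfrac{\partial w^{(k)}}{\partial n}\,(u^{(\ell)}-u^{(k)})\,ds \qquad\forall\,w\in V_{I,h}^{(k)}.
\]
Testing with $w=e$, applying Cauchy--Schwarz together with the inverse trace inequality $\|\partial e^{(k)}/\partial n\|_{L^2(F^{(k\ell)})}\lesssim h_k^{-1/2}\|\nabla e\|_{L^2(\Omega^{(k)})}$, and re-expressing $\alpha^{(k)}\|u^{(\ell)}-u^{(k)}\|^2_{L^2(F^{(k\ell)})}$ as $(h_{k\ell}/\delta)$ times a single summand of $p^{(k)}(u_{B_e},u_{B_e})$, one arrives at $a^{(k)}(e,e)\lesssim \delta^{-1}\,p^{(k)}(u_{B_e},u_{B_e})$ with hidden constant independent of $h_k, H_k, \alpha^{(k)}$ (the bounded mesh-size ratio $h_{k\ell}/h_k$ being absorbed without harming the independence claimed by the lemma).

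The endgame is a short algebraic expansion. Writing $A:=\mathcal{H}^{(k)}u_{B_e}$, $B:=\mathcal{H}_e^{(k)}u_{B_e}$, exploiting the $a^{(k)}$-orthogonality $a^{(k)}(A,e)=0$ and the fact that $p^{(k)}$ is entirely determined by $u_{B_e}$, one obtains the clean identity
\[
 d^{(k)}(B,B) \;=\; d^{(k)}(A,A) + a^{(k)}(e,e).
\]
Combined with the previous bound on $a^{(k)}(e,e)$ and the trivial inequality $p^{(k)}(u_{B_e},u_{B_e})\leq d^{(k)}(A,A)$ (immediate from the structure of $d^{(k)}$), this closes the right-hand inequality with $C=1+\mathrm{const}/\delta$. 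The sole delicate point is the bookkeeping for the inverse trace step: one must verify that the combination of the $h_k^{-1}$ factor produced by the inverse trace with the harmonic mean $h_{k\ell}$ appearing in the penalty normalisation leaves only the bounded quantity $h_{k\ell}/h_k$ in the final constant.
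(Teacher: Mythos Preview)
Your strategy is the standard one---it is precisely the argument behind Lemma~3.1 of \cite{HL:DryjaGalvisSarkis:2013a}, which the present paper cites in lieu of a proof: a Dirichlet-principle argument for the left inequality, and a perturbation bound via the discrete inverse trace inequality for the right. The right-hand estimate and the identity $d^{(k)}(B,B)=d^{(k)}(A,A)+a^{(k)}(e,e)$ are correct and cleanly executed.

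There is, however, a wobble in the left-inequality step that stems from the definition of $d^{(k)}$. The paper's displayed formula $d^{(k)}=a_e^{(k)}+p^{(k)}$ is evidently a misprint: the subsequent identity $\|u_h\|_{dG}^2=d_h(u_h,u_h)$ and the reference \cite{HL:DryjaGalvisSarkis:2013a} both require $d^{(k)}=a^{(k)}+p^{(k)}$, i.e.\ with no $s^{(k)}$ term. Your mention of the ``piece of $s^{(k)}(\cdot,w)$'' among the cancellations suggests you are working with the misprinted version. If so, your claim that $\mathcal{H}^{(k)}u_{B_e}$ is $d^{(k)}$-harmonic is false: the second piece of $s^{(k)}(\cdot,w)$, namely $\int_{F^{(k\ell)}}\tfrac{\alpha^{(k)}}{2}\,\tfrac{\partial w^{(k)}}{\partial n}\,(u^{(\ell)}-u^{(k)})\,ds$, does \emph{not} vanish for $w\in V_{I,h}^{(k)}$ (indeed, you exploit exactly this surviving term in your right-inequality identity), so the Dirichlet-principle step would break down. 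With the correct reading $d^{(k)}=a^{(k)}+p^{(k)}$ the left inequality is immediate---$p^{(k)}$ depends only on the trace $u_{B_e}$, so minimising $d^{(k)}$ over extensions is the same as minimising $a^{(k)}$, and $\mathcal{H}^{(k)}u_{B_e}$ is that minimiser by construction---and no reference to $s^{(k)}$ is needed at all. Once you adopt this reading consistently, your proof is complete and matches the cited argument.
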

%
%
The subsequent statement immediately follows from \lemref{lem:wellPosedDg} and \lemref{lem:equDiscHarmonic}, see also \cite{HL:DryjaGalvisSarkis:2013a}.
%
%
\begin{corollary}
\label{cor:equivalence_dGNorm_DHE}
The spectral equivalence inequalities
  \begin{align}
  C_0 d^{(k)}(\mathcal{H}^{(k)}{u_{B_e}},\mathcal{H}^{(k)}{u_{B_e}})\leq a_e^{(k)}(\mathcal{H}_e^{(k)}{u_{B_e}},\mathcal{H}_e^{(k)}{u_{B_e}})\leq C_1 d^{(k)}(\mathcal{H}^{(k)}{u_{B_e}},\mathcal{H}^{(k)}{u_{B_e}}),
 \end{align}
hold for all $u_{B_e}^{(k)}\in W^{(k)}$,
where the constants $C_0$ and $C_1$ are independent of 
$\delta, h_k, H_k, \alpha^{(k)}$ and $u_{B_e}^{(k)}$.
\end{corollary}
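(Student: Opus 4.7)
The plan is to read the corollary as a direct chaining of Lemma~\ref{lem:wellPosedDg} (norm equivalence between $a_e^{(k)}$ and $d^{(k)}$ on the full dG space) with Lemma~\ref{lem:equDiscHarmonic} (energy equivalence of the two discrete harmonic extensions $\mathcal{H}^{(k)}$ and $\mathcal{H}_e^{(k)}$ in the $d^{(k)}$-norm). No new analytic ingredient is needed; the only work is to compose the two inequality chains and keep track of what the constants depend on.

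First, I would apply Lemma~\ref{lem:wellPosedDg} to the specific test function $\mathcal{H}_e^{(k)} u_{B_e}\in V_{h,e}^{(k)}$, which yields the sandwich
\begin{equation*}
\gamma_0\, d^{(k)}\!\bigl(\mathcal{H}_e^{(k)} u_{B_e},\mathcal{H}_e^{(k)} u_{B_e}\bigr)
\;\le\; a_e^{(k)}\!\bigl(\mathcal{H}_e^{(k)} u_{B_e},\mathcal{H}_e^{(k)} u_{B_e}\bigr)
\;\le\; \gamma_1\, d^{(k)}\!\bigl(\mathcal{H}_e^{(k)} u_{B_e},\mathcal{H}_e^{(k)} u_{B_e}\bigr).
\end{equation*}
Second, I would substitute the outer terms by Lemma~\ref{lem:equDiscHarmonic}: on the left, use the lower bound $d^{(k)}(\mathcal{H}^{(k)} u_{B_e},\mathcal{H}^{(k)} u_{B_e})\le d^{(k)}(\mathcal{H}_e^{(k)} u_{B_e},\mathcal{H}_e^{(k)} u_{B_e})$ to produce the claimed lower bound with $C_0:=\gamma_0$; on the right, use the upper bound $d^{(k)}(\mathcal{H}_e^{(k)} u_{B_e},\mathcal{H}_e^{(k)} u_{B_e})\le C\, d^{(k)}(\mathcal{H}^{(k)} u_{B_e},\mathcal{H}^{(k)} u_{B_e})$ to produce the claimed upper bound with $C_1:=\gamma_1 C$.

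I do not expect any real obstacle here, since the corollary is essentially a one-line composition. The only point that deserves explicit mention is the book-keeping of constants: both $\gamma_0,\gamma_1$ from Lemma~\ref{lem:wellPosedDg} and $C$ from Lemma~\ref{lem:equDiscHarmonic} are independent of $\delta$, $h_k$, $H_k$, $\alpha^{(k)}$ and the particular trace $u_{B_e}^{(k)}$, so the products $C_0=\gamma_0$ and $C_1=\gamma_1 C$ inherit exactly the same independence, which is precisely what the corollary claims.
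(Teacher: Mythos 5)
Your proposal is correct and is exactly the paper's argument: the paper itself states that the corollary ``immediately follows from Lemma~\ref{lem:wellPosedDg} and Lemma~\ref{lem:equDiscHarmonic},'' i.e.\ precisely the composition of the patch-local equivalence \eqref{equ:equivPatchNormdG} applied to $\mathcal{H}_e^{(k)}u_{B_e}$ with the two-sided bound of Lemma~\ref{lem:equDiscHarmonic}, yielding $C_0=\gamma_0$ and $C_1=\gamma_1 C$ with the stated independence of the constants.
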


\subsection{Global space description}
Based on the definitions of the local spaces in \secref{sec:LocSpaces}, we can introduce the space 
\begin{align*}
 V_{h,e} := \{ v \,|\, v^{(k)}\in V_{h,e}^{(k)}, k\in\{1,\ldots,N\}\}
\end{align*}
for the whole extended domain $\Omega_e$.
Additionally, we need a description of the global extended interface spaces
\begin{align*}
 W := \{ v_{B_e} \,|\, v_{B_e}^{(k)}\in W^{(k)}, k\in\{1,\ldots,N\}\} = \prod_{k=1}^N W^{(k)}.
\end{align*}
We note that, according to \cite{HL:DryjaGalvisSarkis:2013a}, we will also interpret this space  as subspace of $V_{h,e}$, where its functions are discrete harmonic in the sense of $\mathcal{H}^{(k)}_e$ on each $\Omega^{(k)}$. For completeness, we define the discrete NURBS harmonic extension in the sense of $\sum_{k=1}^N a^{(k)}_e(\cdot,\cdot)$ and $\sum_{k=1}^N a^{(k)}(\cdot,\cdot)$ for $W$ as $\mathcal{H}_e u =\{\mathcal{H}_e^{(k)} u^{(k)}\}_{k=1}^N$ and $\mathcal{H}_e u =\{\mathcal{H}^{(k)} u^{(k)}\}_{k=1}^N$, respectively.

We aim at reformulating \eqref{equ:ModelDiscDG} and \eqref{equ:Ku=f_DG} in terms of the extended domain $\Omega_e$ and introducing Lagrange multipliers in order to couple of the independent interface dofs. In the context of tearing and interconnecting methods, we need a ``continuous'' subspace $\widehat{W}$ of $W$ such that $\widehat{W}$ is equivalent to $V_{\Gamma,h}$, i.e., $\widehat{W} \equiv V_{\Gamma,h}$. Since the space $V_{\Gamma,h}$ consists of functions which are discontinuous across the patch interface, the common understanding of continuity makes no sense. For an appropriate definition of continuity in the context of the spaces $\widehat{W}, W, V_{\Gamma,h}, V_{h,e}$ and $V_{h}$, we refer to \cite{HL:HoferLanger:2016a}. Similarly, we can define  a ``continuous'' subspace $\widehat{V}_{h,e}\subset V_{h,e}$, such that $\widehat{V}_{h,e}\equiv V_h$.

We can reformulate (\ref{equ:Ku=f_DG}) in the space $\widehat{V}_{h,e}$ yielding the equation $\widehat{\boldsymbol{K}}_e \boldsymbol{u}_e = \widehat{\boldsymbol{f}}_e$. By means of the local Schur complements defined in \secref{sec:DHE_Schur}, we can reformulate this equation as $\widehat{\boldsymbol{S}}_e \boldsymbol{u}_{B_e} = \widehat{\boldsymbol{g}}_e$, where $u_{B_e}\in\widehat{W}$. This equation is equivalent to the following minimization problem
\begin{align}
      \label{equ:min_Schur}
    u_{B_e,h} = \underset{w\in W, Bw = 0}{\text{argmin}} \; \frac{1}{2} \langle S_e w , w\rangle -  \langle g_e,w\rangle,
   \end{align}
   where the operator $B$ enforces the ``continuity'' of $w\in W$, i.e. $\widehat{W}=\ker{B}$, and $\langle S_e w,v\rangle:=\sum_{k=1}^N\langle S_e\sMP w\sMP,v\sMP\rangle$ is the operator representation of $\mathbf{S}_e:=\text{diag}(\mathbf{S}_e\sMP)$.
In the following, we will only work with the Schur complement system.
In order to simplify the notation, we will use $u$ instead of $u_{B_e,h}$, when we consider functions in $V_{\Gamma,h}$. 
If 
we have to made
a distinction between $u_h, u_{B_e,h}$ and $u_{I,h}$, 
we will add the subscripts again.

For the dual-primal variants of the tearing and interconnecting methods, we need a space $\widetilde{W}$ with $\widehat{W} \subset \widetilde{W} \subset W$ and where $S_e$ restricted to $\widetilde{W}$ is positive definite. Let $\Psi \subset V_{\Gamma,h}^*$ be a set of linearly independent \emph{primal variables}.
Then we define the spaces
\begin{equation*}
\widetilde{W} := \{w\in W:  \psi(w^{(k)}) = \psi(w^{(\ell)}), \forall\psi \in \Psi, \forall k>l  \}
\end{equation*}
and
\begin{equation*}
W_{\Delta} := \prod_{k=1}^N W_{\Delta}^{(k)},\text{ with} \quad W_{\Delta}^{(k)}:=\{w^{(k)}\in W^{(k)}:\, 
\psi(w^{(k)}) =0\; \forall\psi \in \Psi\}.
\end{equation*}
Moreover, we introduce the space $W_{\Pi} \subset \widehat{W}$ such that
$\widetilde{W} = W_{\Pi} \oplus W_{\Delta}.$
We call $W_{\Pi}$ \emph{primal space} and $W_{\Delta}$ \emph{dual space}. 
If we choose $\Psi$ such that $\widetilde{W} \cap \ker{S_e}=\{0\}$, then
  \begin{align*}
   \widetilde{S}_e: \widetilde{W} \to \widetilde{W}^*, \, \text{ with } \langle \widetilde{S}_e v,w\rangle =  \langle S_e v,w\rangle \quad \forall v,w \in \widetilde{W},
  \end{align*}
  is invertible. 
Typical choices are continuous vertex values and/or continuous interface averages. A formal definition of the primal variables for dG-IETI-DP method can be found in \cite{HL:HoferLanger:2016a}. In the following analysis, we will restrict ourselves to the case of continuous vertex values, i.e., $\psi^{\mathcal{V}}(v) = v(\mathcal{V})$, where $\mathcal{V}$ is a corner of $\Omega_e\sMP$.

\subsection{IETI - DP and preconditioning}
\label{sec:IETI_DP}
We are now in the position to reformulate the problem \eqref{equ:min_Schur} in $\widetilde{W}$ and write it as saddle point problem  as follows:
Find $(u,\boldsymbol{\lambda}) \in \widetilde{W} \times U:$
    \begin{align}
    \label{equ:saddlePointReg}
     \MatTwo{\widetilde{S}_e}{\widetilde{B}^T}{\widetilde{B}}{0} \VecTwo{u}{\boldsymbol{\lambda}} = \VecTwo{\widetilde{g}}{0},
    \end{align}
    where $\widetilde{S}_e, \widetilde{B}$ and $ \widetilde{g}$ are the corresponding representations in $\widetilde{W}$ and $\widetilde{W}^*$.
%
%
By construction, $\widetilde{S}_e$ is SPD on $\widetilde{W}$. Therefore, we can define the Schur complement $F$ 
and the corresponding right-hand side of equation \eqref{equ:saddlePointReg} 
as follows:
\begin{align*}
    F:= \widetilde{B} \widetilde{S}_e^{-1}\widetilde{B}^T, \quad d:= \widetilde{B}\widetilde{S}_e^{-1} \widetilde{g}.
\end{align*}
Hence, the saddle point system \eqref{equ:saddlePointReg} is equivalent to 
the Schur complement problem:
\begin{align}
   \label{equ:SchurFinal}
      \text{Find } \boldsymbol{\lambda} \in U: \quad F\boldsymbol{\lambda} = d.
\end{align}
Equation \eqref{equ:SchurFinal} is solved by means of the PCG algorithm, but it requires an appropriate preconditioner in order to obtain an efficient solver. According to \cite{HL:DryjaGalvisSarkis:2013a} and \cite{HL:DryjaSarkis:2014a}, the right choice for FE is the \emph{scaled Dirichlet preconditioner} $M_{sD}^{-1}$, 
adapted to
the extended set of dofs. 
In \secref{sec:cond_bound} we will prove, that the scaled Dirichlet preconditioner works well  for the IgA setting too. A formal definition of $M_{sD}^{-1}$ and  numerical experiments confirming this can be found in \cite{HL:HoferLanger:2016a}. 
Since we can consider the dG-IETI-DP method as a conforming Galerkin (cG) method on an extended grid,
 we can implement the dG-IETI-DP algorithm following the implementation of the corresponding 
 cG-IETI-DP method given in \cite{HL:HoferLanger:2016b}.

  In \cite{HL:DryjaGalvisSarkis:2013a} and \cite{HL:DryjaSarkis:2014a}, it is proven for FE that the condition number behaves 
like the condition number of the preconditioned system
for the continuous FETI-DP method, see also \cite{HL:DryjaGalvisSarkis:2007a} for dG-BDDC FE preconditioners. From \cite{HL:HoferLanger:2016b} and \cite{HL:VeigaChoPavarinoScacchi:2013a}, we know that the condition number of the continuous IETI-DP and BDDC-IgA operators 
is also quasi-optimal with respect to the ratio of patch and mesh size. In the next section, we prove that the condition number for the dG-IETI-DP operator behaves as
\begin{align*}
       \kappa(M_ {sD}^{-1} F_{|\text{ker}(\widetilde{B}^T)}) \leq C \max_k\left(1+\log\left(\frac{H_k}{h_k}\right)\right)^2,
\end{align*}
where $H_k$ and $h_k$ are the patch size and mesh size, respectively, 
and the positive  constant $C$ is independent of 
$H_k$, $h_k$, but depends on $h_k/h_\ell$. We use the fact that the IETI-DP method and the BDDC preconditioner have the same spectrum, up to some zeros and ones, which was proven in  \cite{HL:MandelDohrmannTezaur:2005a} based on algebraic arguments. So we will prove the condition number bound for the corresponding BDDC method and the result then also applies to the dG-IETI-DP method. We can use the framework developed in \cite{HL:VeigaChinosiLovadinaPavarino:2010a} also for the dG variant, since the dG-IETI-DP method can be seen as a IETI-DP method on an extended domain $\Omega_e$.  In the next section we provide some auxiliary results, which will be needed for the proof in \secref{sec:cond_bound}.

\section{Preliminary results}
\label{sec:Preliminary_results}
In this section we want to define a discrete norm $|\cdot|_{dG}$ for the space $V_{h,e}^{(k)}$, based on the coefficient vector $\boldsymbol{u} = (u_i)_{i\in\iSet\sMP_e}$, which can be seen as the discrete analogue of the norm induced by $d\sMP(\cdot,\cdot)$. For notational simplicity  we denote this induced norm again by $\|\cdot\|_{dG}$. The difficulty is that the grids on $F\sMP[k\ell]$ and $F\sMP[\ell k]$ do not match and, hence, the coefficients corresponding to that part cannot be directly related. We will resolve that issue using a $L^2$-projection onto $F\sMP[\ell k]$. Although some results are stated with arbitrary dimension $d$, we will always focus on the case $d=2$ with continuous vertex values only. In order to have a clear distinction between the function $u$ and its coefficients $(u_i)_{i\in\iSet_e}$, we denote in the following the coefficients with $(c_i)_{i\in\iSet_e}$.

We rephrase important definitions and results from \cite{HL:VeigaChoPavarinoScacchi:2013a} and \cite{HL:HoferLanger:2016b} with small adjustment due considering the dG-formulation. Let $\g{u}\sMP = \{\g{u}\sMP[k,k], \{\g{u}\sMP[k,\ell]\}_{\ell\in \ifSet\sMP}\}$ be a function in $V_{h,e}\sMP$. The functions $\g{u}\sMP[k,k]$ and $\g{u}\sMP[k,\ell]$ possess a representation of the form
\begin{align}
\label{equ:coeff_dgSpline}
 \g{u}\sMP[k,k] = \sum_{i\in\iSet\sMP[k,k]}c_{i}\sMP[k,k] \g{N}_{i,p}\sMP \text{\quad and\quad } \g{u}\sMP[k,\ell] = \sum_{i\in\iSet\sMP[k,\ell]}c_{i}\sMP[k,\ell]\g{N}_{i,p}\sMP[\ell],
\end{align}
where $\g{u}\sMP[k,k]\in V_{h}^{(k)}$ and $ \g{u}\sMP[k,\ell] \in V_{h,e}^{(k)}(F^{(\ell k)})$. Here, $\iSet\sMP[k,k]$
denotes all indices, such $\g{N}_{i,p}\sMP$ has a support on $\Omega\sMP$ and $\iSet\sMP[k,\ell]$
denotes all indices, such $\g{N}_{i,p}\sMP[\ell]$ has a support on $F\sMP[\ell k]$. Moreover, we define the trace space of $V_{h}\sMP$ on $F\sMP[k\ell]$ as $V_{h}\sMP(F\sMP[k\ell])$, i.e., $\g{u} \sMP[k,k]_{|F\sMP[k\ell]}\in V_{h}\sMP(F\sMP[k\ell])$. We define the parameter domain representation of $\g{u}$ as $\p{u}\sMP = \{\p{u}\sMP[k,k], \{\p{u}\sMP[k,\ell]\}_{\ell\in \ifSet\sMP}\}$, where 
\begin{align}
\label{equ:coeff_dgSplineParameter}
 \p{u}\sMP[k,k] = \sum_{i\in\iSet\sMP[k,k]}c_{i}\sMP[k,k] \p{N}_{i,p}\sMP \text{\quad and\quad } \p{u}\sMP[k,\ell] = \sum_{i\in\iSet\sMP[k,\ell]}c_{i}\sMP[k,\ell]\p{N}_{i,p}\sMP[\ell]
\end{align}
and $c_{i}\sMP[k,k]$ and $c_{i}\sMP[k,\ell]$ are as in \eqref{equ:coeff_dgSpline}.

 Let $h_k$ and $\hh_k$ be the characteristic meshsize in $\Omega\sMP$ and $\p{\Omega}\sMP$, respectively.  
 Since the geometry mapping $G\sMP$ is fixed on a coarse discretization, it is independent of $h_k$. Moreover, by basic properties of $G\sMP$ we can assume, that there exists a constant, independent of $H_k$ and $h_k$, such that
\begin{align}
\label{equ:hath_h}
  C^{-1} \hh_k \leq h_k/H_k \leq C \hh_k,
\end{align}
where $H_k$ is the diameter of $\Omega\sMP$.  Given a face $F\sMP[k\ell]$  in $\Omega\sMP$ with diameter $H_{F\sMP[\ell k]}$, we denote its parameter domain representation as $\hf\sMP[k\ell]$. 
The meshsize on $F\sMP[k\ell]$ and $F\sMP[\ell k]$ is given by $h_{F\sMP[k\ell]}$ and $h_{F\sMP[k\ell]}$, respectively. Moreover, we assume for all $\ell\in\ifSet\sMP$ that $h_{F\sMP[k\ell]}\approx h_k$,  $h_{F\sMP[\ell k]}\approx h_\ell$ and $H_k \approx H_{F\sMP[k\ell]} \approx H_\ell$. Together with (\ref{equ:hath_h}), it follows that 
\begin{align}
\label{equ:hat_hkl_hkl}
 h_{k\ell}\approx H_k\hh_{k\ell}\approx H_\ell\hh_{k\ell}.
\end{align}
The introduced notation is illustrated in Figure\,\ref{fig:ParPhysMesh}.

\begin{figure}
 \includegraphics[width=\textwidth]{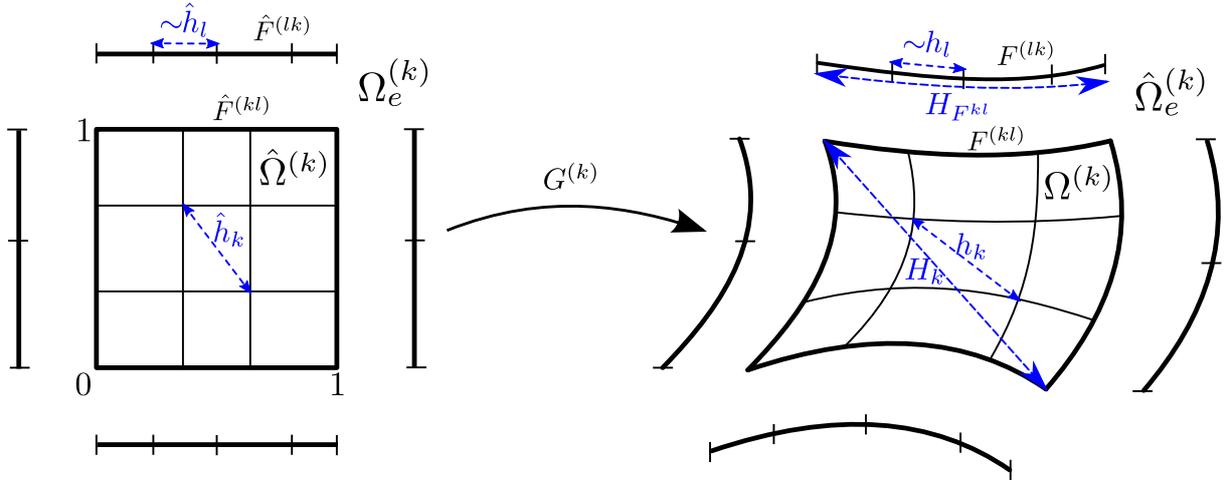}
 \caption{Illustration of the mesh in the parameter domain and in the physical domain, presenting the used notation.}
 \label{fig:ParPhysMesh}
\end{figure}

According to \cite{HL:VeigaChoPavarinoScacchi:2013a} and \cite{HL:HoferLanger:2016b}, we define a discrete $L^2$ norm and $H^1$ seminorm based on the coefficients $(c_i)_{i\in\iSet}$.   We denote by $c_{i,i^\iota-j}$ the coefficient corresponding to the basis function $\g{N}_{(i^1,\ldots,i^\iota-j,\ldots,i^d),p}$.
\begin{definition}
\label{def:discrete_norm}
Let $\g{u}\sMP\in V_{h,e}^{(k)}$ and $\p{u}\sMP$ its counterpart in the parameter domain. We define the  $L^2$ norm and $H^1$ seminorm for $\g{u}\sMP[k,k]$ as
     \begin{align*}
       |\p u\sMP[k,k]|_{\square}^2&:= \sum_{i\in\iSet\sMP[k,k]}|c_i^{(k,k)}|^2 \hh_k^2,\\
       |\p u\sMP[k,k]|_{\nabla}^2&:=\sum_{\iota=1}^d  |\p u|_{\xi^\iota}^2 \text{, where }\quad |\p u|_{\xi^\iota}^2:= \sum_{i\in\mathcal{I}_\iota^{(k,k)}}|c_{i,i^\iota}^{(k,k)} -c_{i,i^\iota-1}^{(k,k)}|^2,
     \end{align*} 
     where $i\in\mathcal{I}_\iota^{(k,k)}\subset\iSet\sMP[k,k]$ such that $c_{i,i^\iota-1}^{(k,k)}$ is well defined.
     
     Analogously, we define the discrete $L^2$ norm on $F\sMP[k\ell]$ and  $F\sMP[\ell k]$ via
     \begin{align*}
      |\p u\sMP[k,k]_{|\p F\sMP[k\ell]}|_{\square}^2&:= \sum_{i\in\iSet(F\sMP[k\ell])}|c_i^{(k,k)}|^2 \hh_k,\\
      |\p u\sMP[k,\ell]|_{\square}^2&:= \sum_{i\in\iSet\sMP[k,\ell]}|c_i^{(k,\ell)}|^2 \hh_\ell,
     \end{align*}
    where $\iSet(F\sMP[k\ell])\subset\iSet\sMP[k,k]$ are all indices of basis functions $\g{N}_{i,p}\sMP$ which have a support on $F\sMP[k\ell]\subset\overline{\Omega}\sMP$.
\end{definition}

\begin{assumption}
\label{ass:geoMap}
We assume that geometrical mapping $G\sMP$ has the properties 
\begin{align*}
	\|\nabla G\sMP\|_{L^\infty((0,1)^d)}\approx H_k\quad \text{ and }\quad\|\det\nabla  G\sMP\|_{L^\infty((0,1)^d)}\approx H_k^{d},
\end{align*}
where the hidden constants are independent of  $h_k$ and $H_k$.
\end{assumption}

\begin{proposition}
\label{prop:equivalence_l2_h1}
 Let $\g{u}\sMP\in V_{h,e}^{(k)}$ and $\p{u}\sMP$ its counterpart in the parameter domain. We have that
 \begin{align*}
    |\p u\sMP[k,k]|_{\square}^2& \approx \|\p u\sMP[k,k]\|_{L^2((0,1)^2)}^2 \approx H_k^{-2}\|\g{u}\sMP[k,k]\|_{L^2(\Omega\sMP)}^2 ,\\
    |\p u\sMP[k,k]|_{\nabla}^2  &\approx |\p u\sMP[k,k]|_{H^1((0,1)^2)}^2 \approx |\g{u}\sMP[k,k]|_{H^1(\Omega\sMP)}^2,\\
    |\p u\sMP[k,k]_{|\p F\sMP[k\ell]}|_{\square}^2 &\approx \|\p u\sMP[k,k]_{|\p{F}\sMP[k\ell]}\|_{L^2((0,1))}^2 \approx H_k^{-1}\|\g{u}\sMP[k,k]_{|F\sMP[k\ell]}\|_{L^2(F\sMP[k\ell])}^2,\\
    |\p u\sMP[k,\ell]|_{\square}^2 &\approx \|\p u\sMP[k,\ell]\|_{L^2((0,1))}^2 \approx H_k^{-1}\|\g{u}\sMP[k,\ell]\|_{L^2(F\sMP[k\ell])}^2,
 \end{align*}
 where the hidden constants do not depend on $h_k$ or $H_k$.
\end{proposition}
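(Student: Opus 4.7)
The plan is to establish all four equivalences by composing two changes of representation: first between the coefficient-based discrete norms and the parameter-domain Lebesgue/Sobolev norms, and second between the parameter-domain norms and the physical-domain norms via the geometry map $G^{(k)}$. Both steps are standard tools; the role of Assumption~\ref{ass:geoMap} is to control the second step uniformly in $H_k$ and $h_k$.

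For the parameter-domain-to-discrete step, I would invoke the well-known $L^2$-stability of the tensor-product B-spline basis: for $\hat u = \sum c_i \hat N_{i,p}$ on the parameter cube, $\|\hat u\|_{L^2((0,1)^d)}^2 \approx \hat h_k^{d}\sum_i |c_i|^2$, with constants depending only on $p$ and the mesh regularity. In dimension $d=2$ this gives precisely $|\hat u^{(k,k)}|_{\square}^2 \approx \|\hat u^{(k,k)}\|_{L^2}^2$. For the seminorm, I would use the standard derivative identity $\partial_{\xi^\iota}\hat N_{i,p} = \tfrac{p}{\xi_{i+p}-\xi_i}(\hat N_{i,p-1}-\hat N_{i+1,p-1})$, so that $\partial_{\xi^\iota}\hat u$ is again a spline (of lower degree in that direction) whose coefficients are, up to an $O(\hat h_k^{-1})$ factor, the differences $c_{i,i^\iota}-c_{i,i^\iota-1}$. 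Applying the $L^2$-stability to the derivative spline produces the factor $\hat h_k^{d-2}\sum |c_{i,i^\iota}-c_{i,i^\iota-1}|^2$, which in $d=2$ is exactly $|\hat u^{(k,k)}|_\nabla^2$. The face norms reduce to the one-dimensional instance of the same $L^2$-stability applied to the univariate trace spline on $\hat F^{(k\ell)}$, giving the weight $\hat h_k$ (respectively $\hat h_\ell$ for $\hat u^{(k,\ell)}$ which is expanded in the neighbour's basis).

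For the parameter-to-physical step I would use change of variables with $G^{(k)}$. By Assumption~\ref{ass:geoMap}, $\|\det\nabla G^{(k)}\|_\infty \approx H_k^d$, and because $G^{(k)}$ is a fixed regular parametrisation (independent of $h_k$) one also gets a pointwise lower bound of the same order; combined with $\|\nabla G^{(k)}\|_\infty \approx H_k$ this yields $\|(\nabla G^{(k)})^{-1}\|_\infty \approx H_k^{-1}$ via the cofactor formula. Then
\[
\|u^{(k,k)}\|_{L^2(\Omega^{(k)})}^2 = \int_{\hat\Omega^{(k)}} |\hat u^{(k,k)}|^2\, |\det\nabla G^{(k)}|\, d\xi \approx H_k^{d}\|\hat u^{(k,k)}\|_{L^2}^2,
\]
and
\[
|u^{(k,k)}|_{H^1(\Omega^{(k)})}^2 = \int |(\nabla G^{(k)})^{-T}\hat\nabla\hat u^{(k,k)}|^2\, |\det\nabla G^{(k)}|\, d\xi \approx H_k^{d-2}|\hat u^{(k,k)}|_{H^1}^2,
\]
which in $d=2$ are the claimed equivalences. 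For the face norms I would parametrise $F^{(k\ell)}$ by the corresponding edge of $\hat F^{(k\ell)}$; the one-dimensional surface measure produces a Jacobian of order $H_k$, so $\|u_{|F^{(k\ell)}}\|_{L^2}^2 \approx H_k \|\hat u_{|\hat F^{(k\ell)}}\|_{L^2}^2$, and similarly for $\hat u^{(k,\ell)}$ with the analogous factor $H_k\approx H_\ell$ (using $H_k\approx H_\ell$ for adjacent patches).

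The main obstacle is not technical but bookkeeping: I must keep the roles of the two meshsizes $h_k$ and $h_\ell$ straight on $F^{(\ell k)}$, where $\hat u^{(k,\ell)}$ is expanded in the neighbour's basis $\hat N_{i,p}^{(\ell)}$ but integrated against the patch-$k$ geometry. The relation $h_{k\ell}\approx H_k\hat h_{k\ell}$ from \eqref{equ:hat_hkl_hkl} and the assumption $H_k \approx H_\ell$ across the interface ensure that the $H_k$ prefactor coming out of the change of variables on $F^{(k\ell)}$ is compatible with the $\hat h_\ell$ factor in the discrete norm of $\hat u^{(k,\ell)}$, so that the final equivalences hold with constants that do not depend on $H_k$ or $h_k$ (but, in the ratio $h_k/h_\ell$, do propagate into later estimates via $q_h$).
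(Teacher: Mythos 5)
Your proposal is correct and follows essentially the same route as the paper: the paper simply cites the coefficient--parameter equivalences (Corollary 5.1 and Proposition 5.2 in the Beir\~ao da Veiga et al.\ reference, i.e.\ the $L^2$-stability of the B-spline basis and the derivative formula you invoke) and the parameter--physical scaling $|\g{u}\sMP|_{H^1(\Omega\sMP)}\approx H_k^{d-2}|\p{u}\sMP|_{H^1((0,1)^d)}$ from Lemma~3.5 of Bazilevs et al., under Assumption~\ref{ass:geoMap}. Your explicit treatment of the edge terms, including the use of $H_k\approx H_\ell$ for the neighbour-basis expansion of $\p{u}\sMP[k,\ell]$, matches the conventions the paper sets up before the proposition, so nothing is missing.
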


\begin{proof}
 Follows directly from Assumption~\ref{ass:geoMap}, Corollary 5.1. and Proposition 5.2. in \cite{HL:VeigaChoPavarinoScacchi:2013a} and the equivalence between of norms in the parameter and physical space, see, e.g., Lemma~3.5 in \cite{HL:BazilevsVeigaCottrellHughesSangalli:2006a}. It is important to note, that for the $H^1$ seminorm it holds that $|\g{u}\sMP|_{H^1(\Omega\sMP)}\approx H^{d-2} |u\sMP|_{H^1((0,1)^d)}$, which follows from the proof of Lemma~3.5 in \cite{HL:BazilevsVeigaCottrellHughesSangalli:2006a}.
\end{proof}
Next, we define the $L^2$-projection, in order to provide an approximation of $\g{u}\sMP[k,\ell]$ on $F\sMP[\ell k]$.
%
%
\begin{definition}
\label{def:L2_proj}
 We define by $\pi_{F\sMP[\ell k]}:V_{h}\sMP(F\sMP[k\ell])\to V\sMP_{h,e}(F\sMP[\ell k])$, the  orthogonal $L^2$-projection from the space $V_{h}\sMP(F\sMP[k\ell])$ onto $V_{h,e}^{(k)}(F^{(\ell k)})$.
 Moreover, for $\g{v}\in V_{h}\sMP(F\sMP[k\ell])$, we denote the coefficients of $\pi_{F\sMP[\ell k]}\g{v}$ by $\tilde{c}_i\sMP[k,\ell]$, i.e.,
 \begin{align}
 \label{equ:L2_proj_coeffs}
   \pi_{F\sMP[\ell k]}\g{v}= \sum_{i\in\iSet\sMP[k,\ell]}\tilde{c}_i\sMP[k,\ell] \g{N}_{i,p}\sMP[\ell].
 \end{align}

\end{definition}
\begin{lemma}
\label{lem:l2_projection}
 Let $\g{v}\in V_{h}\sMP$ and $\pi_{F\sMP[\ell k]}$ be the $L^2$-projection onto $V_{h,e}\sMP(F\sMP[\ell k])$ as in Definition~\ref{def:L2_proj}. Then it holds 
 \begin{align*}
  \|\g{v} - \pi_{F\sMP[\ell k]}\g{v}\|_{L^2(F\sMP[k\ell])}^2 \leq C h_\ell\frac{h_\ell}{h_k} \ho{|\g{v}|}_{H^1(\Omega\sMP)}^2,
 \end{align*}
 where the generic constant C is independent of $h_k$, $h_\ell$ or $H_k$.
\end{lemma}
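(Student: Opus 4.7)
The plan is to combine the best-approximation property of the $L^2$-projection with a standard spline approximation estimate on the interface, and then close the gap to $|\g{v}|_{H^1(\Omega\sMP)}^2$ via a discrete trace inequality applied to $\nabla \g{v}$.

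First, I would pass to the parameter domain. On $\hf\sMP[k\ell] = (0,1)$ the trace $\p v\sMP[k,k]$ is a univariate spline with meshsize $\hh_k$, whereas $V_{h,e}\sMP(F\sMP[\ell k])$ corresponds to a spline space on $(0,1)$ with meshsize $\hh_\ell$. Since $\pi_{F\sMP[\ell k]}$ is the $L^2$-orthogonal projection, for any quasi-interpolant $Q_\ell$ mapping into $V_{h,e}\sMP(F\sMP[\ell k])$ I have
\begin{align*}
  \|\g v - \pi_{F\sMP[\ell k]}\g v\|_{L^2(F\sMP[k\ell])} \;\leq\; \|\g v - Q_\ell \g v\|_{L^2(F\sMP[k\ell])}.
\end{align*}
Choosing $Q_\ell$ to be a standard B-spline quasi-interpolant at meshsize $\hh_\ell$ (which works regardless of the fact that $\p v\sMP[k,k]$ was produced from a finer meshsize $\hh_k$, because only $H^1$-regularity of the argument is required), the usual approximation estimate on $(0,1)$ and the scaling relations from Proposition~\ref{prop:equivalence_l2_h1} together with \eqref{equ:hath_h} give
\begin{align*}
  \|\g v - Q_\ell \g v\|_{L^2(F\sMP[k\ell])}^2 \;\leq\; C\, h_\ell^{2}\, |\g v|_{H^1(F\sMP[k\ell])}^2.
\end{align*}

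The remaining step is to convert the tangential $H^1$-seminorm on the interface into a volumetric seminorm, picking up the factor $h_k^{-1}$. Because $\nabla \g v$ is a piecewise polynomial on the mesh $\mathcal{Q}_h\sMP$, the standard discrete trace inequality applied element-by-element in the strip of cells adjacent to $F\sMP[k\ell]$ yields
\begin{align*}
  |\g v|_{H^1(F\sMP[k\ell])}^2 \;\leq\; \|\nabla \g v\|_{L^2(F\sMP[k\ell])}^2 \;\leq\; C\, h_k^{-1}\, \|\nabla \g v\|_{L^2(\Omega\sMP)}^2 \;=\; C\, h_k^{-1}\, |\g v|_{H^1(\Omega\sMP)}^2.
\end{align*}
Chaining the two displays produces $\|\g v - \pi_{F\sMP[\ell k]}\g v\|_{L^2(F\sMP[k\ell])}^2 \leq C h_\ell^{2} h_k^{-1}|\g v|_{H^1(\Omega\sMP)}^2$, which is exactly the claimed bound after writing $h_\ell^2/h_k = h_\ell \cdot (h_\ell/h_k)$.

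The main obstacle I expect is keeping every constant independent of $h_k, h_\ell, H_k$: the quasi-interpolant estimate must be taken in the parameter domain so that it is purely a statement about splines on $(0,1)$, and the transition back to the physical interface must use only Assumption~\ref{ass:geoMap}-type scalings as already compiled in Proposition~\ref{prop:equivalence_l2_h1}. The other subtle point is that the discrete trace inequality is applied to $\nabla \g v$, not to $\g v$ itself; this is standard because $\nabla \g v$ remains piecewise polynomial on the physical mesh $\mathcal{Q}_h\sMP$, but one should briefly note that the element-level inverse trace estimate is invariant under the fixed geometry map $G\sMP$, so that the resulting constant depends only on the spline degree.
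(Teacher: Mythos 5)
Your proposal is correct and follows essentially the same route as the paper's proof: best approximation of the $L^2$-projection against a B-spline quasi-interpolant at meshsize $h_\ell$, the interpolation estimate $\|\g v - Q_\ell\g v\|_{L^2(F\sMP[k\ell])}^2 \leq C h_\ell^2 |\g v|_{H^1(F\sMP[k\ell])}^2$, and the discrete trace inequality $|\g v|_{H^1(F\sMP[k\ell])}^2 \leq C h_k^{-1}|\g v|_{H^1(\Omega\sMP)}^2$ (the paper cites Lemma~4.3 of Evans and Hughes for this step). Your additional remarks on working in the parameter domain and on the element-level inverse trace estimate for $\nabla\g v$ only make explicit what the paper leaves implicit.
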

\begin{proof}
Since the $L^2$-projection minimizes the error in the $L^2$ norm among all projections, we have that
\begin{align*}
\|\g{v} - \pi_{F\sMP[\ell k]}\g{v}\|_{L^2(F\sMP[k\ell])}^2 \leq C \|\g{v} - \mathcal{I}_{F\sMP[\ell k]}\g{v}\|_{L^2(F\sMP[k\ell])}^2,
\end{align*}
where $\mathcal{I}_{F\sMP[\ell k]}$ is the quasi B-Spline interpolant. By means of the interpolation estimate
\begin{align*}
 \|\g{v} - \mathcal{I}_{F\sMP[\ell k]}\g{v}\|_{L^2(F\sMP[k\ell])}^2 \leq C h_\ell^2\ho{|\g{v}|}_{H^1(F\sMP[k\ell])}^2
\end{align*}
and the discrete trace inequality, see, e.g., Lemma 4.3. in \cite{HL:EvansHughes:2013a},
\begin{align*}
 |\g{v}|_{H^1(F\sMP[k\ell])}^2 \leq C h_k^{-1}  |\g{v}|_{H^1(\Omega\sMP)}^2,
\end{align*}
we have
\begin{align*}
\|\g{v} - \pi_{F\sMP[\ell k]}\g{v}\|_{L^2(F\sMP[k\ell])}^2\leq C h_\ell^2|\g{v}|_{H^1(F\sMP[k\ell])}^2 \leq C h_\ell \frac{h_\ell}{h_k} |\g{v}|_{H^1(\Omega\sMP)}^2,
\end{align*}
which proves the estimate. \qed
\end{proof}
Now, we are in the position to define the discrete dG-norm and prove bounds in terms of $\|\cdot\|_{dG}$, as defined in (\ref{HL:dgNorm}).
\begin{definition}
\label{def:discrete_dg}
 Let $\g{u}\sMP\in V_{h,e}^{(k)}$ and $\p{u}\sMP$ its counterpart in the parameter domain. Moreover, for $\ell\in\ifSet\sMP$ let $\pi_{F\sMP[\ell k]}\g{u}\sMP[k,k]_{F\sMP[k\ell]}$, be the $L^2$-projection onto $V\sMP_{h,e}(F\sMP[\ell k])$ according to Definition~\ref{def:L2_proj} with coefficients $(\tilde{c}_i\sMP[k,\ell])_{i\in\iSet\sMP[k,\ell]}$ as in (\ref{equ:L2_proj_coeffs}). We define the discrete dG-norm $|\p{u}\sMP|_{dg}$ as
 \begin{align}
 \label{equ:discrete_dg}
  |\p u\sMP|_{dg}^2 := &|\p u\sMP[k,k]|_{\nabla}^2 + \sum_{\ell\in\ifSet\sMP} \frac{\delta }{\hh_{k\ell}} |\g u\sMP[k,\ell]-\pi_{F\sMP[\ell k]}\g u\sMP[k,k]_{|F\sMP[k\ell]}|_{\square}^2\\
		   =&|\p u\sMP[k,k]|_{\nabla}^2 + \sum_{\ell\in\ifSet\sMP} \frac{\delta }{\hh_{k\ell}} \sum_{i\in\iSet\sMP[k,\ell]}|c_i\sMP[k,\ell]-\tilde{c}_i\sMP[k,\ell]|^2 \hh_\ell,
 \end{align}
where $\delta$ is as in (\ref{HL:dgNorm}). We note that $|\g u\sMP[k,\ell]-\pi_{F\sMP[\ell k]}\g u\sMP[k,k]_{|F\sMP[k\ell]}|_{\square}^2$ is defined as $\sum_{i\in\iSet\sMP[k,\ell]}|c_i\sMP[k,\ell]-\tilde{c}_i\sMP[k,\ell]|^2 \hh_\ell$, cf. Definition\,\ref{def:discrete_norm} and (\ref{equ:coeff_dgSplineParameter}). 
\end{definition}
\begin{proposition}
\label{prop:equivalence_dg_disc}
 Let $\g{u}\sMP\in V_{h,e}^{(k)}$ and $\p{u}\sMP$ its counterpart in the parameter domain. Then we have
 \begin{align}
  \label{equ:discrete_dg_upper}
  H^{d-2}_k|\p u\sMP|_{dG}^2 \leq C \|\g{u}\sMP\|_{dG}^2,
 \end{align}
and 
\begin{align}
 \label{equ:discrete_dg_lower}
\ho { \|\g{u}\sMP\|_{dG}^2 \leq C\qh\sMP H^{d-2}_k |\p u\sMP|_{dG}^2 },
\end{align}
where $\qh\sMP:=\max_{\ell\in\ifSet\sMP}\left(\frac{h_\ell}{h_k}+\frac{h_\ell^2}{h_k^2}\right)$ and the generic constant is independent of $h_k$ and $H_k$.
\end{proposition}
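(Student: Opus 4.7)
The plan is to analyze the gradient and interface contributions of $|\p u\sMP|_{dG}^2$ separately, leveraging Proposition~\ref{prop:equivalence_l2_h1}, the scaling equivalence \eqref{equ:hat_hkl_hkl}, Lemma~\ref{lem:l2_projection}, and the $L^2$-boundedness of the projection $\pi_{F\sMP[\ell k]}$. Since we restrict to $d = 2$, the prefactor $H_k^{d-2}$ is unity and can be dropped; I also treat $\alpha\sMP$ as an absolute constant. For the gradient piece, Proposition~\ref{prop:equivalence_l2_h1} delivers $|\p u\sMP[k,k]|_\nabla^2 \approx |\g u\sMP[k,k]|_{H^1(\Omega\sMP)}^2$ with hidden constants free of $h_k, H_k$, so this contribution enters both inequalities without any $\qh$-factor.

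For each interface summand in $|\p u\sMP|_{dG}^2$, the third and fourth lines of Proposition~\ref{prop:equivalence_l2_h1} together with \eqref{equ:hat_hkl_hkl} give
\[
\tfrac{\delta}{\hh_{k\ell}}\,|\g u\sMP[k,\ell] - \pi_{F\sMP[\ell k]} \g u\sMP[k,k]|_\square^2 \;\approx\; \tfrac{\delta}{h_{k\ell}}\,\|\g u\sMP[k,\ell] - \pi_{F\sMP[\ell k]} \g u\sMP[k,k]\|_{L^2(F\sMP[k\ell])}^2.
\]
For the upper bound \eqref{equ:discrete_dg_upper}, I use that $\pi_{F\sMP[\ell k]}$ is the identity on its range $V_{h,e}\sMP(F\sMP[\ell k])$, which contains $\g u\sMP[k,\ell]$, so $\g u\sMP[k,\ell] - \pi_{F\sMP[\ell k]} \g u\sMP[k,k] = \pi_{F\sMP[\ell k]}(\g u\sMP[k,\ell] - \g u\sMP[k,k])$, and the $L^2$-contraction of the projection bounds this by $\|\g u\sMP[k,\ell] - \g u\sMP[k,k]\|_{L^2(F\sMP[k\ell])}^2$, which is exactly the continuous interface contribution. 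For the lower bound \eqref{equ:discrete_dg_lower}, I insert $\pm\,\pi_{F\sMP[\ell k]} \g u\sMP[k,k]$ and apply the triangle inequality. The ``discrete-jump'' piece returns the discrete interface term by the equivalence above. The ``projection-error'' piece $\|\g u\sMP[k,k] - \pi_{F\sMP[\ell k]} \g u\sMP[k,k]\|_{L^2(F\sMP[k\ell])}^2$ is controlled by Lemma~\ref{lem:l2_projection} as $\leq C h_\ell (h_\ell/h_k) |\g u\sMP[k,k]|_{H^1(\Omega\sMP)}^2$; dividing by $h_{k\ell}$ and using $h_\ell/h_{k\ell} = \tfrac12(1 + h_\ell/h_k)$ turns the prefactor into $\tfrac12\delta(h_\ell/h_k + h_\ell^2/h_k^2) \leq C\delta\qh\sMP$, which together with $|\g u\sMP[k,k]|_{H^1(\Omega\sMP)}^2 \approx |\p u\sMP[k,k]|_\nabla^2$ is absorbed into $\qh\sMP$ times the already-controlled gradient part of $|\p u\sMP|_{dG}^2$.

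The subtle point is the emergence of the polynomial $q(z) = z + z^2$: it is forced by the interaction between Lemma~\ref{lem:l2_projection}, which produces the factor $h_\ell^2/h_k$, and the dG weighting $1/h_{k\ell} = (h_k+h_\ell)/(2 h_k h_\ell)$, whose product is precisely $\tfrac12(h_\ell/h_k + h_\ell^2/h_k^2)$. Once this arithmetic is carried out carefully, the remainder of the argument is a concatenation of already-established norm equivalences and the $L^2$-stability of the projection; no further geometric or trace-inequality machinery is needed, and summing over $\ell\in\ifSet\sMP$ produces the claimed $\qh\sMP$ on the right-hand side of \eqref{equ:discrete_dg_lower}.
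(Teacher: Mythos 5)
Your proposal is correct and follows essentially the same route as the paper's proof: Proposition~\ref{prop:equivalence_l2_h1} and \eqref{equ:hat_hkl_hkl} for the norm equivalences, the identity $\pi_{F\sMP[\ell k]}\g u\sMP[k,\ell]=\g u\sMP[k,\ell]$ plus $L^2$-stability of the projection for the upper bound, and the triangle inequality with $\pm\pi_{F\sMP[\ell k]}\g u\sMP[k,k]$ combined with Lemma~\ref{lem:l2_projection} and the arithmetic on $h_\ell/h_{k\ell}\cdot h_\ell/h_k$ for the lower bound. Your bookkeeping of the factor $\tfrac12(h_\ell/h_k+h_\ell^2/h_k^2)$ is in fact slightly more careful than the paper's (which states the constant as $2$ rather than $\tfrac12$, a harmless discrepancy absorbed into the generic constant).
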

\begin{proof}
 We first prove (\ref{equ:discrete_dg_upper}). The discrete dG-norm is defined as
 \begin{align*}
  |\p u\sMP|_{dg}^2 := |\p u\sMP[k,k]|_{\nabla}^2 + \sum_{\ell\in\ifSet\sMP}\frac{\delta}{\hh_{k\ell}} |\g u\sMP[k,\ell]-\pi_{F\sMP[\ell k]}\g u\sMP[k,k]_{|F\sMP[k\ell]}|_{\square}^2,
 \end{align*}
  where we can immediately bound the first term according to Proposition~\ref{prop:equivalence_l2_h1} by
  \begin{align}
   \label{equ:proof_discrete:upper:a}
   H^{d-2}_k|\p u\sMP[k,k]|_{\nabla}^2 \leq C |\g{u}\sMP[k,k]|_{H^1(\Omega\sMP)}^2.
  \end{align}
  For the second term, it holds
  \begin{align}
  \label{equ:proof_discrete:upper:b}
  \begin{split}
   H^{d-2}_k\sum_{\ell\in\ifSet\sMP}\frac{\delta }{\hh_{k\ell}}|\g u\sMP[k,\ell]-\pi_{F\sMP[\ell k]}\g u\sMP[k,k]_{|F\sMP[k\ell]}|_{\square}^2 \leq& \sum_{\ell\in\ifSet\sMP}\frac{\delta H^{d-1}_k }{h_{k\ell}}|\g u\sMP[k,\ell]-\pi_{F\sMP[\ell k]}\g u\sMP[k,k]_{|F\sMP[k\ell]}|_{\square}^2\\ 
   \leq& C\sum_{\ell\in\ifSet\sMP}\frac{\delta }{h_{k\ell}}\|\g{u}\sMP[k,\ell]-\pi_{F\sMP[\ell k]}\g{u}\sMP[k,k]_{|F\sMP[k\ell]}\|_{L^2(F\sMP[k\ell])}^2\\
   =& C\sum_{\ell\in\ifSet\sMP}\frac{\delta}{h_{k\ell}}\|\pi_{F\sMP[\ell k]}\left(\g{u}\sMP[k,\ell]-\g{u}\sMP[k,k]_{|F\sMP[k\ell]}\right)\|_{L^2(F\sMP[k\ell])}^2\\
   \leq& C\sum_{\ell\in\ifSet\sMP}\frac{\delta }{h_{k\ell}}\|\g{u}\sMP[k,\ell]-\g{u}\sMP[k,k]_{|F\sMP[k\ell]}\|_{L^2(F\sMP[k\ell])}^2,\\
  \end{split}
  \end{align}
  where we used (\ref{equ:hat_hkl_hkl}), Proposition~\ref{prop:equivalence_l2_h1}, the fact that $\pi_{F\sMP[\ell k]}\g{u}\sMP[k,\ell] = \g{u}\sMP[k,\ell]$ and the stability of the $L^2$-projection in the $L^2$ norm. Combining (\ref{equ:proof_discrete:upper:a}) and (\ref{equ:proof_discrete:upper:b}) gives 
  \begin{align*}
   H^{d-2}_k|\p u\sMP|_{dg}^2 \leq C \left(|\g{u}\sMP[k,k]|_{H^1(\Omega\sMP)}^2 + \sum_{\ell\in\ifSet\sMP}\frac{\delta}{h_{k\ell}}\|\g{u}\sMP[k,\ell]-\g{u}\sMP[k,k]_{|F\sMP[k\ell]}\|_{L^2(F\sMP[k\ell])}^2\right) = C \|\g{u}\sMP\|_{dG}^2,
  \end{align*}
  where C is a generic constant independent of $h$ and $H$.
  We now proof the the second estimate (\ref{equ:discrete_dg_lower}). The dG-norm reads
  \begin{align*}
   \|\g{u}\sMP\|_{dG}^2=|\g{u}\sMP|_{H^1(\Omega\sMP)}^2 + \sum_{\ell\in\ifSet\sMP}\frac{\delta}{h_{k\ell}}\|\g{u}\sMP[k,\ell]-\g{u}\sMP[k,k]_{|F\sMP[k\ell]}\|_{L^2(F\sMP[k\ell])}^2.
  \end{align*}
  Similar as before, we bound the first term by means of Proposition~\ref{prop:equivalence_l2_h1} via
    \begin{align}
   \label{equ:proof_discrete:lower:a}
   |\g{u}\sMP[k,k]|_{H^1(\Omega\sMP)}^2 \leq C H^{d-2}_k|\p u\sMP[k,k]|_{\nabla}^2.
  \end{align}
  For the second term, we have for $\ell\in\ifSet\sMP$
  \begin{align}
  \label{equ:proof_discrete:lower:b}
  \|\g{u}\sMP[k,\ell]-\g{u}\sMP[k,k]_{|F\sMP[k\ell]}\|_{L^2(F\sMP[k\ell])}^2 \leq \|\g{u}\sMP[k,\ell]-\pi_{F\sMP[\ell k]}\g{u}\sMP[k,k]_{|F\sMP[k\ell]}\|_{L^2(F\sMP[k\ell])}^2 +\|\pi_{F\sMP[\ell k]}\g{u}\sMP[k,k]_{|F\sMP[k\ell]}-\g{u}\sMP[k,k]_{|F\sMP[k\ell]}\|_{L^2(F\sMP[k\ell])}^2,
  \end{align}
  The first term of (\ref{equ:proof_discrete:lower:b}) can be estimated by means of Proposition~\ref{prop:equivalence_l2_h1} by
  \begin{align}
  \label{equ:proof_discrete:lower:ba}
   \|\g{u}\sMP[k,\ell]-\pi_{F\sMP[\ell k]}\g{u}\sMP[k,k]_{|F\sMP[k\ell]}\|_{L^2(F\sMP[k\ell])}^2 \leq C H^{d-1}_k |\g u\sMP[k,\ell]-\pi_{F\sMP[\ell k]}\g u\sMP[k,k]_{|F\sMP[k\ell]}|_{\square}^2.
  \end{align}
  Lemma~\ref{lem:l2_projection} yields for the second term in \eqref{equ:proof_discrete:lower:b}
  \begin{align*}
   \|\pi_{F\sMP[\ell k]}\g{u}\sMP[k,k]_{|F\sMP[k\ell]}-\g{u}\sMP[k,k]_{|F\sMP[k\ell]}\|_{L^2(F\sMP[k\ell])}^2 \leq C h_\ell\frac{h_\ell}{h_k} |\g{u}\sMP[k,k]|^2_{H^1(\Omega\sMP)},
  \end{align*}
  since $\g{u}\sMP[k,k]\in V\sMP_h$
  and according to Proposition~\ref{prop:equivalence_l2_h1}, we obtain
  \begin{align}
  \label{equ:proof_discrete:lower:bb}
    \|\pi_{F\sMP[\ell k]}\g{u}\sMP[k,k]_{|F\sMP[k\ell]}-\g{u}\sMP[k,k]_{|F\sMP[k\ell]}\|_{L^2(F\sMP[k\ell])}^2 \leq C h_\ell\frac{h_\ell}{h_k} H^{d-2}_k|\p{u}\sMP[k,k]|^2_{\nabla}.
  \end{align}
  Combining (\ref{equ:proof_discrete:lower:b}) with (\ref{equ:proof_discrete:lower:ba}) and (\ref{equ:proof_discrete:lower:bb}) and using the fact that $\frac{h_\ell}{h_{k\ell}}\frac{h_\ell}{h_k} = 2\left(\frac{h_\ell}{h_k}+\frac{h_\ell^2}{h_k^2}\right)$ together with $|\ifSet\sMP|\leq 4$ and (\ref{equ:hat_hkl_hkl}) gives
  \begin{align*}
   \sum_{\ell\in\ifSet\sMP}\frac{\delta}{h_{k\ell}}\|\g{u}\sMP[k,\ell]-\g{u}\sMP[k,k]_{|F\sMP[k\ell]}\|_{L^2(F\sMP[k\ell])}^2 \leq C \biggl( &8\delta\max_{\ell\in\ifSet\sMP}\left(\frac{h_\ell}{h_k}+\frac{h_\ell^2}{h_k^2}\right) H^{d-2}_k|\p{u}\sMP[k,k]|^2_{\nabla}\\
   &+ \sum_{\ell\in\ifSet\sMP} \frac{\delta H^{d-2}_k}{\hh_{k\ell}}|\g u\sMP[k,\ell]-\pi_{F\sMP[\ell k]}\g u\sMP[k,k]_{|F\sMP[k\ell]}|_{\square}^2\biggr),
  \end{align*}
which concludes the proof. \qed
\end{proof}

We now provide properties of the local index spaces. Since we consider only the two dimensional problem, we can interpret the coefficients $(c_i\sMP)_{i\in\iSet_e\sMP}$ of $\g{u}\sMP$ as a matrix plus four additional vectors for the extra boundary, i.e., $\boldsymbol C_e\sMP:=\{\boldsymbol C\sMP[k,k], \{\boldsymbol c\sMP[k,\ell]\}_{\ell\in\ifSet\sMP}\}\in\mathcal{R}_e\sMP:=\mathbb{R}^{M_1\sMP\times M_2\sMP}+\prod_{\ell\in\ifSet\sMP} \mathbb{R}^{M\sMP[\ell k]}$, where $\boldsymbol C\sMP[k,k]:=(c_i\sMP)_{i\in\iSet\sMP[k,k]}$ and $\boldsymbol c\sMP[k,\ell]:=(c_i)_{i\in\iSet\sMP[k,\ell]}$ for $\ell\in\ifSet\sMP$.
The number $M\sMP[\ell k]$ denotes the number of coefficients associated to $\p{F}\sMP[\ell k]$, i.e., $M\sMP[\ell k]= |\iSet\sMP[k,\ell]|$. We note, that there exists a $\iota^*\in\{1,\ldots,d\}$ such that $M\sMP[\ell k] = M_{\iota^*}\sMP[\ell]$. Moreover, we assume that there exists a constant $\beta\in\mathbb{R}^{{+}}$ such that ${\beta}^{-1}M_2\sMP \leq M_1\sMP\leq \beta M_2\sMP.$

The entries of the matrix $\boldsymbol{C}_e\sMP$ can be interpreted as values on a uniform grid $\mathcal{T}\sMP_e:=\mathcal{T}\sMP\cup\bigcup_{\ell\in\ifSet\sMP}\mathcal{T}\sMP[k\ell]$ on $\overline{\p{\Omega}_e}$, where $\mathcal{T}\sMP$ and $\mathcal{T}\sMP[k\ell]$ are the grids corresponding to $\boldsymbol{C}\sMP[k,k]$ and $\boldsymbol{c}\sMP[k,\ell]$, respectively.  The meshes $\mathcal{T}\sMP$ and $\mathcal{T}\sMP[k\ell]$ have a characteristic meshsize 
\begin{align*}
 \tth_k:= \left(\frac{1}{(M_1\sMP-1)^2}+\frac{1}{(M_2\sMP-1)^2}\right)^{1/2} \text{ and }  \tth\sMP[\ell k]=\frac{1}{M\sMP[\ell k]-1},
\end{align*}
 respectively. Hence, we have
\begin{align*}
 C^{-1}_{\beta,k} \frac{1}{M_1\sMP} \leq \tth_k \leq C_{\beta,k} \frac{1}{M_1\sMP}, \quad \text{and}\quad  C^{-1}_{\beta,l} \tth_\ell \leq \tth\sMP[\ell k] \leq C_{\beta,l} \tth_\ell,
\end{align*}
where the constants $C_{\beta,k}$ and $C_{\beta,l}$ depend only on $\beta$. 
By basic properties of the geometrical mapping $G$ and the B-Splines $\p{N}_{i,p}$, it is easy to see that
\begin{align}
\label{equ:h/H}
C^{-1}_{G,\beta}\tth_k\leq h_k/H_k \leq \tth_k C_{G,\beta} \text{ and } C^{-1}_{\beta} \tth_k\leq \hh_k \leq C_{\beta} \tth_k,
\end{align}
where the 
the constant $C_{\beta}$ depends only on $\beta$ and the constant $C_{G,\beta}$ additionally also on $G$.
Finally, we define the harmonic average $\tth_{k\ell} := 2\tth_k\tth_\ell/(\tth_k+\tth_\ell)$.

We are now able to introduce a dG-norm on the discrete coefficient-space $\mathcal{R}_e\sMP$ as follows
\begin{align}
\tl \boldsymbol C_e\sMP \tl_{dG}^2:= \tl \boldsymbol C\sMP[k,k] \tl_{\nabla}^2  + \sum_{\ell\in\ifSet\sMP}\frac{\delta}{ \tth_{k\ell}}\sum_{i=1}^{M\sMP[\ell k]}|c_i\sMP[k,\ell]-\tilde{c}_i\sMP[k,\ell]|^2 \tth_{l},
\end{align}
where $\tilde{c}_i\sMP[k,\ell]$ is defined analogously as in Definition~\ref{def:discrete_dg} and 
\begin{align*}
 \tl \boldsymbol C\sMP[k,k] \tl_{\nabla}^2:=\sum_{\iota=1}^2 \sum_{\substack{i=1 \\i^\iota=2}}^{M\sMP}|c_{i,i^\iota}\sMP[k,k]-c_{i,i^\iota-1}\sMP[k,k]|^2.
\end{align*}
We note, that for given function $\g{u}\in V_{h,e}\sMP$ with coefficient representation $\boldsymbol C_e\in \mathcal{R}_e\sMP$, we have 
\begin{align}
\label{equ:discreteCoeff}
 C^{-1}|\p u|_{dG}^2 \leq \tl\boldsymbol C_e\tl_{dG}^2 \leq C |\p u|_{dG}^2,
\end{align}
where the constant $C$ depends only on the constants $C_{G,\beta}$ and $C^{-1}_{\beta,k}$ from patch $k$ and all its neighbouring patches.

This motivates the definition of an operator  $(\cdot)_I:\, C(\overline{\p{\Omega}_e}\sMP) \to \mathcal{R}_e\sMP$, where $C(\overline{\p{\Omega}_e}\sMP):=C(\overline{\p{\Omega}}\sMP)+\prod_{i=1}^4C(\overline{\hf\sMP[k\ell]})$, which evaluates a continuous functions on $\overline{\p{\Omega}_e\sMP}$ in the grid points $x_i$ of $\mathcal{T}_e$. Moreover, we introduce an operator  $\chi\sMP: \mathcal{R}_e\sMP \to H^1(\p{\Omega}_e):=H^1(\p{\Omega})+\prod_{\ell\in\ifSet\sMP}H^1(\hf\sMP[\ell k])$,
 that provides a piecewise bilinear interpolation of the given grid values, i.e., $\chi\sMP(\boldsymbol{v})\in \mathcal{Q}_1(\mathcal{T}_e\sMP):= \mathcal{Q}_1(\mathcal{T}\sMP)+\prod_{\ell\in\ifSet\sMP}\mathcal{P}_1(\mathcal{T}\sMP[k\ell])$. Here $\mathcal{Q}_1(\mathcal{T}\sMP)$ is the space of piecewise bilinear functions on $\mathcal{T}\sMP$ and $\mathcal{P}_1(\mathcal{T}\sMP[k\ell])$ the space of piecewise linear functions on $\mathcal{T}\sMP[k\ell]$.

 Given values on an edge $\hf\sMP[k\ell]_e:=\hf\sMP[k\ell]\cup \hf\sMP[\ell k]$ and its associated grid $\mathcal{T}\sMP[k\ell]_e:= \mathcal{T}\sMP[k]_{|\hf\sMP[k\ell]} \cup \mathcal{T}\sMP[\ell,k]$, we need to define its linear interpolation and a discrete harmonic extension to the interior.
  In order to do so, let us denote  all indices of grid points $x_i$ associated to $\hf\sMP[k\ell]_e$ by $\mathcal{I}(\hf\sMP[k\ell]_e)$. Additionally, let $\mathcal{P}_1(\mathcal{T}\sMP[k\ell]_e):=\mathcal{P}_1(\mathcal{T}\sMP[k\ell])+\mathcal{P}_1(\mathcal{T}\sMP[\ell k])$ be the space of piecewise linear spline functions on $\mathcal{T}\sMP[k\ell]_e$.   We define the interpolation of values on $\hf\sMP[k\ell]_e$ via the restriction of the operator $\chi\sMP$ to $\hf\sMP[k\ell]_e$, denoted by $\chi_{\hf\sMP[k\ell]_e}\sMP : \mathbb{R}^{{M_\iota\sMP+M\sMP[\ell k]}} \to H^1(\hf\sMP[k\ell])+ H^1(\hf\sMP[\ell k])$ with an analogous definition. In a similar way, we define the interpolation operator for the whole boundary $\Gamma_e$, denoted by $\chi_{\Gamma,e}\sMP: \mathbb{R}^{{|\mathcal{I}(\Gamma_e)|}} \to H^1(\partial\p{\Omega}\sMP)+\prod_{\ell\in\ifSet\sMP}H^1(\hf\sMP[\ell k])$, where $\mathcal{I}(\Gamma_e): = \{i: x_i \in \Gamma_e\}$.
  %
  %
   According to \cite{HL:VeigaChoPavarinoScacchi:2012a}, we define a seminorm for grid points on an edge $\hf\sMP[k\ell]$ via the interpolation to functions in  $\mathcal{P}_1(\mathcal{T}\sMP[k\ell]_e)$:
\begin{definition}
Let $\hf\sMP[k\ell]$ be an edge of $\p{\Omega}\sMP$ along dimension $\iota$.
Then we define the 
seminorm
$\tl\boldsymbol{v}\tl_{\hf\sMP[k\ell]} := 
|\chi\sMP_{\hf\sMP[k\ell]}(\boldsymbol{v})|_{H^{1/2}(\hf\sMP[k\ell])}$
for all $\boldsymbol{v}\in\mathbb{R}^{\mathcal{I}(\hf\sMP[k\ell])}$.
%
%
  \end{definition}

   \begin{definition}
   Let $\mathcal{H}_{\mathcal{Q}_{1,e}}\sMP$ be the standard discrete harmonic extension in the sense of $a_e\sMP(\cdot,\cdot)$ into the piecewise bilinear space $\mathcal{Q}_{1,e}$, see \cite{HL:DryjaGalvisSarkis:2013a} for a formal definition. This defines the lifting operator $\boldsymbol{H}_e\sMP : \mathbb{R}^{|\mathcal{I}(\Gamma_e)|} \to \mathcal{R}_e\sMP$ by
   \begin{align*}
   \begin{split}
             \boldsymbol{b}\mapsto \boldsymbol{H}_e\sMP(\boldsymbol{b}):= (\mathcal{H}_{\mathcal{Q}_{1,e}}\sMP(\chi_{\Gamma,e}\sMP(\boldsymbol{b})))_I.
   \end{split}
   \end{align*}
  \end{definition}
\begin{theorem}
\label{HL:thm:realMatProp}
   Let $\hf\sMP[k\ell]$ be a particular side of $\partial\p{\Omega}\sMP$ and the constant $\beta\in\mathbb{R}^{{+}}$ such that
    ${\beta^{-1}}M_2\sMP \leq M_1\sMP\leq \beta M_2\sMP.$
   Then the following statements hold:
   \begin{enumerate}
    \item For all $\boldsymbol{b}\in \mathbb{R}^{|\mathcal{I}(\Gamma_e)|}$ that vanish on the twelve components corresponding to the twelve corners $\mathcal{V}_e\sMP$, 
    the estimate
    \begin{align*}    
     \tl\boldsymbol{H}_e\sMP(\boldsymbol{b})\tl_{dG}^2 \leq C  \tqh\sMP  (1+\log^2\tth_k^{-1})\sum_{\ell\in\ifSet\sMP}\left(\tl \boldsymbol b|_{\hf\sMP[k\ell]}\tl_{\hf\sMP[k\ell]}^2 +\frac{\delta }{\tth_{k\ell}}\sum_{i=1}^{M\sMP[\ell k]}|b_i\sMP[k,\ell]-\tilde{b}_i\sMP[k,\ell]|^2 \tth_\ell \right), 
    \end{align*}
    holds, where $\tqh\sMP:=\max_{\substack{\ell\in\ifSet\sMP}}\left(\frac{\tth_\ell}{\tth_k}+\frac{\tth_\ell^2}{\tth_k^2}\right)$ and the constant $C$ does not depend on $h_k$ or $H_k$.
    \item 
    The estimate
    \begin{align*}
      \tl \boldsymbol{C} \tl^2_{dG} \geq C \left(\tl\boldsymbol{C}|_{\hf\sMP[k\ell]}\tl^2_{\hf\sMP[k\ell]} +  \sum_{\ell\in\ifSet\sMP}
      \frac{\delta }{\tth_{k\ell}}\sum_{i=1}^{M\sMP[\ell k]}|c_i\sMP[k,\ell]-\tilde{c}_i\sMP[k,\ell]|^2 \tth_\ell\right)
    \end{align*}
    is valid for all $\boldsymbol{C} \in\mathcal{R}\sMP_e$,
    where the constant $C$ does not depend on $h_k$ or $H_k$.
   \end{enumerate}
\end{theorem}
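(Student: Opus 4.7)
\textbf{Proof plan for Theorem \ref{HL:thm:realMatProp}.}

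The plan is to reduce both statements to properties of standard piecewise bilinear finite element functions on the uniform grid $\mathcal{T}_e^{(k)}$, using the interpolation operator $\chi^{(k)}$ as a bridge between coefficient space $\mathcal{R}_e^{(k)}$ and the function space $\mathcal{Q}_{1,e}$. The two key facts I would rely on are: (i) the dG-coefficient norm $\tl\cdot\tl_{dG}$ on $\mathcal{R}_e^{(k)}$ is equivalent, up to constants that depend only on $\beta$ and $G$, to the continuous dG-norm of $\chi^{(k)}(\cdot)$ (this mirrors Proposition~\ref{prop:equivalence_dg_disc} but for the $\mathcal{Q}_1$ space rather than the spline space); and (ii) the $\mathcal{H}_{\mathcal{Q}_{1,e}}^{(k)}$ extension minimizes the energy $a_e^{(k)}$ over all $\mathcal{Q}_{1,e}$ functions matching the prescribed boundary data on $\Gamma_e^{(k)}$.

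For part (1), I would first write $\tl\boldsymbol{H}_e^{(k)}(\boldsymbol{b})\tl_{dG}^2$ as a sum of the gradient part on $\hat\Omega^{(k)}$ and the jump contributions from the strips $\hat F^{(\ell k)}$. The gradient part, after passing to $\chi^{(k)}$, equals $|\mathcal{H}_{\mathcal{Q}_1}^{(k)}\bigl(\chi_{\partial\hat\Omega^{(k)}}(\boldsymbol{b}|_{\partial\hat\Omega^{(k)}})\bigr)|_{H^1(\hat\Omega^{(k)})}^2$ up to the $H_k^{d-2}$ scaling; since the boundary data vanishes at the four corners of $\hat\Omega^{(k)}$, the classical Bramble--Pasciak--Schatz estimate for discrete harmonic extensions in $\mathcal{Q}_1$ yields the bound $C(1+\log\tth_k^{-1})^2 \sum_\ell |\chi_{\hat F^{(k\ell)}}(\boldsymbol{b}|_{\hat F^{(k\ell)}})|_{H^{1/2}(\hat F^{(k\ell)})}^2$, which is exactly $\sum_\ell \tl\boldsymbol{b}|_{\hat F^{(k\ell)}}\tl_{\hat F^{(k\ell)}}^2$. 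For the jump term on $\hat F^{(\ell k)}$, the extension on the strip is tautological (the values on the strip coincide with $\boldsymbol{b}^{(k,\ell)}$), so $c_i^{(k,\ell)} = b_i^{(k,\ell)}$. Writing the projected boundary trace on $\hat F^{(\ell k)}$ and comparing to the interior extension's trace, I would estimate the difference $|b_i^{(k,\ell)} - \tilde b_i^{(k,\ell)}|$ against the direct jump term plus an $L^2$-projection error between the meshes $\mathcal{T}^{(k\ell)}$ and $\mathcal{T}^{(\ell k)}$. The latter is where the factor $\tqh^{(k)}$ enters, in direct analogy with Lemma~\ref{lem:l2_projection} and the proof of Proposition~\ref{prop:equivalence_dg_disc}: the projection error is controlled by $(\tth_\ell/\tth_k + \tth_\ell^2/\tth_k^2)$ times a gradient quantity, which after absorption into the logarithmic bulk estimate produces the claimed form.

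For part (2), the argument is much shorter. The jump term on the right-hand side is literally part of the definition of $\tl\boldsymbol{C}\tl_{dG}^2$, so only the edge seminorm on $\hat F^{(k\ell)}$ needs to be bounded. I would use the discrete trace inequality for $\mathcal{Q}_1$ on the uniform grid $\mathcal{T}^{(k)}$, which gives $|\chi^{(k)}(\boldsymbol{C})|_{H^{1/2}(\hat F^{(k\ell)})}^2 \lesssim |\chi^{(k)}(\boldsymbol{C})|_{H^1(\hat\Omega^{(k)})}^2$, and then invoke the equivalence between $|\chi^{(k)}(\boldsymbol{C})|_{H^1(\hat\Omega^{(k)})}^2$ and the discrete seminorm $\tl\boldsymbol{C}^{(k,k)}\tl_\nabla^2$ on the $\mathcal{Q}_1$ grid.

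The main obstacle will be the interplay in part (1) between the interior harmonic extension (which lives on the fine mesh $\mathcal{T}^{(k)}$) and the strip jumps (which compare values living on different 1D meshes $\mathcal{T}^{(k)}|_{\hat F^{(k\ell)}}$ and $\mathcal{T}^{(\ell k)}$). Careful bookkeeping is needed to ensure that the mesh-ratio penalty ends up as the clean factor $\tqh^{(k)}$ and does not contaminate the logarithmic factor. I would follow the pattern used in the proof of Proposition~\ref{prop:equivalence_dg_disc}, splitting $b_i^{(k,\ell)} - \tilde b_i^{(k,\ell)} = (b_i^{(k,\ell)} - (\text{trace})) + ((\text{trace}) - \tilde b_i^{(k,\ell)})$ and estimating each piece via the $L^2$-projection bound and the edge seminorm, so the two penalty sources add rather than multiply.
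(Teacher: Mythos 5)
Your plan follows essentially the same route as the paper's proof: pass to piecewise bilinear functions via $\chi\sMP$, compare the $a_e$-harmonic lifting with the standard $\mathcal{Q}_1$ harmonic extension (the paper cites the FE analogue of Lemma~\ref{lem:equDiscHarmonic}, which is exactly what your energy-minimality argument delivers), apply the Mandel--Dohrmann logarithmic bound for corner-vanishing boundary data, and produce the factor $\tqh\sMP$ through the $L^2$-projection error between the two edge meshes as in Lemma~\ref{lem:l2_projection}, while part (2) is the same trace-plus-norm-equivalence argument the paper takes from Theorem~5.1(b) of the cited BDDC--IgA reference. The only imprecision is your claim that the gradient part \emph{equals} $|\mathcal{H}_{\mathcal{Q}_1}(\cdot)|^2_{H^1}$ (it is merely bounded by it after the extension-comparison step, and no $H_k^{d-2}$ scaling enters the purely coefficient-space statement), which does not affect the validity of the approach.
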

\begin{proof}
 For a better readability, we will omit the superscript $(k)$.
 
 The discrete dG-norm for matrices is defined as
 \begin{align*}
  \tl\boldsymbol{H}_e(\boldsymbol{b})\tl_{dG}^2=\tl\boldsymbol{H}_e(\boldsymbol{b})\tl_{\nabla}^2 + \sum_{\ell\in\ifSet}\frac{\delta }{\tth_{k\ell}}\sum_{i=1}^{M\sMP[\ell k]}|b_i\sMP[k,\ell]-\tilde{b}_i\sMP[k,\ell]|^2 \tth_\ell.
 \end{align*}
 By means of a similar estimate for piecewise bilinear functions as in Proposition~\ref{prop:equivalence_l2_h1}, we have the estimates
  \begin{align*}
   \tl\boldsymbol{H}_e(\boldsymbol{b})\tl_{\nabla}^2 &= |\mathcal{H}_{\mathcal{Q}_{1,e}}\left(\chi_{\Gamma,e}(\boldsymbol{b})\right)|_{\nabla}^2 \leq C|\mathcal{H}_{\mathcal{Q}_{1,e}}\left(\chi_{\Gamma,e}(\boldsymbol{b})\right)|^2_{H^1(\p{\Omega})},\\
   \sum_{i=1}^{M\sMP[\ell k]}|b_i\sMP[k,\ell]-\tilde{b}_i\sMP[k,\ell]|^2 \tth_\ell^2 &= |\chi_{\hf\sMP[\ell k]}(\boldsymbol{b})-\pi_{\hf\sMP[\ell k]}\chi_{\hf\sMP[k\ell]}(\boldsymbol{b})|_{\square}^2\leq C\|\chi_{\hf\sMP[\ell k]}(\boldsymbol{b})-\pi_{\hf\sMP[\ell k]}\chi_{\hf\sMP[k\ell]}(\boldsymbol{b})\|_{L^2(\hf\sMP[k\ell])}^2\\
   &\leq C\|\chi_{\hf\sMP[\ell k]}(\boldsymbol{b})-\chi_{\hf\sMP[k\ell]}(\boldsymbol{b})\|_{L^2(\hf\sMP[k\ell])}^2,
  \end{align*}
and, therefore,
\begin{align*}
 \tl\boldsymbol{H}_e(\boldsymbol{b})\tl_{dG}^2&\leq C\left( |\mathcal{H}_{\mathcal{Q}_{1,e}}\left(\chi_{\Gamma,e}(\boldsymbol{b})\right)|^2_{H^1(\p{\Omega})}+ \sum_{\ell\in\ifSet}\frac{\delta}{\tth_{k\ell}}\|\chi_{\hf\sMP[\ell k]}(\boldsymbol{b})-\chi_{\hf\sMP[k\ell]}(\boldsymbol{b})\|_{L^2(\hf\sMP[k\ell])}^2\right)\\
 &=C \|\mathcal{H}_{\mathcal{Q}_{1,e}}(\chi_{\Gamma,e}(\boldsymbol{b}))\|^2_{dG}.
\end{align*}
%
Using the FE equivalent of Lemma \ref{lem:equDiscHarmonic}, see, e.g. \cite{HL:DryjaGalvisSarkis:2007a}, we can estimate $\|\mathcal{H}_{\mathcal{Q}_{1,e}}(\chi_{\Gamma,e}(\boldsymbol{b}))\|^2_{dG}\leq C\|\mathcal{H}_{\mathcal{Q}_{1}}(\chi_{\Gamma,e}(\boldsymbol{b}))\|^2_{dG}$, where  the constant $C$ is independent of $h_k, H_k$ and $\delta$, and $\mathcal{H}_{\mathcal{Q}_{1}}$ is the standard discrete harmonic extension in the sense of $a\sMP(\cdot,\cdot)$. Hence, we obtain
\begin{align}
\label{equ:proof_MatProp_a}
 \|\mathcal{H}_{\mathcal{Q}_{1,e}}(\chi_{\Gamma,e}(\boldsymbol{b}))\|^2_{dG} &\leq C \left(|\mathcal{H}_{\mathcal{Q}_{1}}\left(\chi_{\Gamma,e}(\boldsymbol{b})\right)|^2_{H^1(\p{\Omega})}+ \sum_{\ell\in\ifSet}\frac{\delta }{\tth_{k\ell}}\|\chi_{\hf\sMP[\ell k]}(\boldsymbol{b})-\chi_{\hf\sMP[k\ell]}(\boldsymbol{b})\|_{L^2(\hf\sMP[k\ell])}^2\right).
\end{align}

%
For the second term of (\ref{equ:proof_MatProp_a}), we use the estimate
\begin{align}
\label{equ:proof_MatProp_aa}
\begin{split}
 \|\chi_{\hf\sMP[\ell k]}(\boldsymbol{b})-\chi_{\hf\sMP[k\ell]}(\boldsymbol{b})\|_{L^2(\hf\sMP[k\ell])}^2 \leq& \|\chi_{\hf\sMP[\ell k]}(\boldsymbol{b})-\pi_{\hf\sMP[\ell k]}\chi_{\hf\sMP[k\ell]}(\boldsymbol{b})\|_{L^2(\hf\sMP[k\ell])}^2 \\
 &+\|\pi_{\hf\sMP[\ell k]}\chi_{\hf\sMP[k\ell]}(\boldsymbol{b})-\chi_{\hf\sMP[k\ell]}(\boldsymbol{b})\|_{L^2(\hf\sMP[k\ell])}^2,
 \end{split}
\end{align}
where the first term can be estimated by FE equivalent of Proposition~\ref{prop:equivalence_l2_h1} with
\begin{align}
 \label{equ:proof_MatProp_aaa}
 \begin{split}
 \|\chi_{\hf\sMP[\ell k]}(\boldsymbol{b})-\pi_{\hf\sMP[\ell k]}\chi_{\hf\sMP[k\ell]}(\boldsymbol{b})\|_{L^2(\hf\sMP[k\ell])}^2 &\leq C |\chi_{\hf\sMP[\ell k]}(\boldsymbol{b})-\pi_{\hf\sMP[\ell k]}\chi_{\hf\sMP[k\ell]}(\boldsymbol{b})|_{\square}^2\\
 & = C \sum_{i=1}^{M\sMP[\ell k]}|b_i\sMP[k,\ell]-\tilde{b}_i\sMP[k,\ell]|^2 \tth_{l}.
 \end{split}
\end{align}
Since $\chi_{\hf\sMP[k\ell]}(\boldsymbol{b})$ is a piecewise linear function, we can use already existent estimates for the $L^2$-projection. 
\ho{We use the following estimate to bound the second term of (\ref{equ:proof_MatProp_aa})
\begin{align}
 \label{equ:proof_MatProp_aab}
 \|\pi_{\hf\sMP[\ell k]}\chi_{\hf\sMP[k\ell]}(\boldsymbol{b})-\chi_{\hf\sMP[k\ell]}(\boldsymbol{b})\|_{L^2(\hf\sMP[k\ell])}^2 \leq C \tth_\ell \frac{\tth_\ell}{\tth_k} |\mathcal{H}_{\mathcal{Q}_{1}}\left(\chi_{\Gamma,e}(\boldsymbol{b})\right)|_{H^1(\p{\Omega})}^2,
\end{align}
which follows by repeating the same arguments as in the proof of Lemma~\ref{lem:l2_projection} for bilinear functions. }
Combining inequalities (\ref{equ:proof_MatProp_aaa}) and (\ref{equ:proof_MatProp_aab}) with (\ref{equ:proof_MatProp_aa}) and using it in (\ref{equ:proof_MatProp_a}) gives
 \begin{align}
 \label{equ:proof_MatProp_b}
 \begin{split}
 \|\mathcal{H}_{\mathcal{Q}_{1}}(\chi_{\Gamma,e}(\boldsymbol{b}))\|^2_{dG} \leq C \bigl( \delta \tqh&|\mathcal{H}_{\mathcal{Q}_{1}}\left(\chi_{\Gamma,e}(\boldsymbol{b})\right)|^2_{H^1(\p{\Omega})} \\
 &+ \sum_{\ell\in\ifSet}\frac{\delta}{\tth_{k\ell}}\sum_{i=1}^{M\sMP[\ell k]}|b_i\sMP[k,\ell]-\tilde{b}_i\sMP[k,\ell]|^2 \tth_\ell\bigr).
 \end{split}
\end{align}

We are now in the position to use the available theory for the standard discrete harmonic extension $\mathcal{H}_{\mathcal{Q}_{1}}$ to estimate the first term of (\ref{equ:proof_MatProp_b}). Recalling the estimate 
\begin{align*}
 |\mathcal{H}_{\mathcal{Q}_1}\left(\chi_{\Gamma,e}(\boldsymbol{b})\right)|_{H^1(\p{\Omega})}^2 \leq C(1+\log^2 \tth^{-1}_k) \sum_{\ell\in\ifSet}|\chi_{\hf\sMP[k\ell]}(\boldsymbol{b})|_{H^{1/2}(\hf\sMP[k\ell])}^2,
\end{align*}
see Theorem. 5 in \cite{HL:MandelDohrmann:2003a} or the proof of Theorem 5.1. in \cite{HL:VeigaChoPavarinoScacchi:2013a}, and
since $|\chi_{\hf\sMP[k\ell]}(\boldsymbol{b})|_{H^{1/2}(\hf\sMP[k\ell])}^2= \tl\boldsymbol{b}|_{\hf\sMP[k\ell]}\tl^2_{\hf\sMP[k\ell]}$, we obtain
\begin{align*}
 \tl\boldsymbol{H}_e(\boldsymbol{b})\tl_{dG}^2 \leq C \tqh\sMP (1+\log^2 \tth_k^{-1})\sum_{\ell\in\ifSet}\left(\tl \boldsymbol{b}|_{\hf\sMP[k\ell]}\tl_{\hf\sMP[k\ell]}^2 + \frac{\delta}{\tth_{k\ell}}\sum_{i=1}^{M\sMP[\ell k]}|b_i\sMP[k,\ell]-\tilde{b}_i\sMP[k,\ell]|^2 \tth_\ell \right).
\end{align*}
This proves the first inequality.  Again, by means of a similar estimate for piecewise bilinear functions as in Proposition~\ref{prop:equivalence_l2_h1} and according to Theorem 5.1(b) in \cite{HL:VeigaChoPavarinoScacchi:2013a}, we have
\begin{align*}
  \tl \boldsymbol{C}\sMP[k,k]\tl_{\nabla}^2 \geq C|\chi(\boldsymbol{C}\sMP[k,k])|^2_{H^1(\p{\Omega})} \geq C \tl\boldsymbol{C}|_{\hf\sMP[k\ell]}\tl^2_{\hf\sMP[k\ell]}.
\end{align*}
Therefore, we obtain
\begin{align*}
 \tl \boldsymbol{C} \tl^2_{dG}  &=  \tl \boldsymbol{C}\sMP[k,k]\tl_{\nabla}^2 +  \sum_{\ell\in\ifSet}\frac{\delta }{\tth_{k\ell}}\sum_{i=1}^{M\sMP[\ell k]}|c_i\sMP[k,\ell]-\tilde{c}_i\sMP[k,\ell]|^2 \tth_{l}\\
 &\geq C\biggl(\tl\boldsymbol{C}|_{\hf\sMP[k\ell]}\tl^2_{\hf\sMP[k\ell]} +  \sum_{\ell\in\ifSet}\frac{\delta }{\tth_{k\ell}}\sum_{i=1}^{M\sMP[\ell k]}|c_i\sMP[k,\ell]-\tilde{c}_i\sMP[k,\ell]|^2 \tth_{l}\biggr),
\end{align*}
which concludes the proof.
\qed
\end{proof}

\section{Condition number bound}
\label{sec:cond_bound}
The goal of this section is to establish the condition number bound for $ M_{BDDC}^{-1}\widehat{S}$. 
Following \cite{HL:VeigaChoPavarinoScacchi:2013a}, we assume  that the mesh is quasi-uniform on each subdomain and the diffusion coefficient
is globally constant. Moreover, in \cite{HL:VeigaChoPavarinoScacchi:2013a} one can also find a formal definition of the BDDC preconditioner $M_{BDDC}^{-1}$. It was already pointed out that the spectrum of $M_{BDDC}^{-1}\widehat{S}$ is equal to $M_{sD}^{-1}F$ up to zeros and ones.
For simplicity, we focus on a patch $\Omega_e^{(k)}$, with $k\in\{1,\ldots, N\}$, which does not touch the boundary $\partial\Omega$.

Let $\g{u}^{(k)}\in V_{h,e}^{(k)}$, then $\g{u}^{(k)}$ is determined by its coefficients $c_i^u, i\in\iSet$, which can be interpreted as a matrix $\boldsymbol C_e\sMP:=\{\boldsymbol C\sMP[k,k], \{\boldsymbol C\sMP[k,\ell]\}_{\ell\in\ifSet\sMP}\}\in\mathcal{R}_e\sMP$. In a similar way, we can identify functions on the trace space $W^{(k)}$. Finally, let $W_{\Delta}^{(k)}\subset W^{(k)}$ be the space of spline functions which vanish on the primal variables, i.e., in the corner points.
The following theorem provides an abstract estimate of the condition number using the coefficient scaling, cf. Theorem 6.1 in \cite{HL:VeigaChoPavarinoScacchi:2012a}:
\begin{theorem}
\label{HL:thm:abstractBDDC}
 Let the counting function ${\delta^{\dagger}}^{(k)}$ be chosen accordingly to the coefficient scaling strategy. Assume that there exist two positive constants $c_*,c^*$ and a boundary seminorm $|\cdot|_{W^{(k)}}$ on $W^{(k)}$, $k=1,\ldots, N$, such that
 \begin{align}
  \label{HL:equ:abstractBDDC_1}
  |\g{w}^{(k)}|_{W^{(k)}}^2 &\leq c^* s_e^{(k)}(\g{w}^{(k)},\g{w}^{(k)}) \quad\forall \g{w}^{(k)}\in W^{(k)},\\
  \label{HL:equ:abstractBDDC_2}
  |\g{w}^{(k)}|_{W^{(k)}}^2 &\geq c_* s_e^{(k)}(\g{w}^{(k)},\g{w}^{(k)}) \quad\forall \g{w}^{(k)}\in W_{\Delta}^{(k)},\\
  \label{HL:equ:abstractBDDC_3}
  |\g{w}^{(k)}|_{W^{(k)}}^2 &= \sum_{\ell\in\ifSet\sMP}|\g{w}^{(k)}|_{F\sMP[k\ell]_e}|_{W^{(k\ell)}}\quad\forall \g{w}^{(k)}\in W^{(k)},
 \end{align}
 where $|\cdot|_{W^{(k\ell)}}$ is a seminorm associated to the edge spaces $W^{(k)}|_{F\sMP[k\ell]_e}$ with $\ell\in\ifSet\sMP$. Then the condition number of the preconditioned BDDC operator $M_{BDDC}^{-1}\widehat{S}$ satisfies the bound
 \begin{align*}
  \kappa(M_{BDDC}^{-1}\widehat{S})\leq C(1+c_*^{-1} c^*),
 \end{align*}
where the constant $C$ is independent of $h$ and $H$.
\end{theorem}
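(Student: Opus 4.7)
The plan is to reduce this to the standard abstract BDDC norm inequality and then discharge it patch-by-patch, edge-by-edge, using the three hypotheses. By the well-established BDDC framework (see, e.g., \cite{HL:ToselliWidlund:2005a, HL:MandelDohrmannTezaur:2005a}), one has the characterization
\begin{align*}
\kappa(M_{BDDC}^{-1}\widehat{S}) \le \|E_D\|_{\widetilde{S}_e}^2,
\end{align*}
where $E_D\colon\widetilde{W}\to\widehat{W}$ is the averaging projection associated with the counting function $\delta^{\dagger}$. So I would reduce the task to showing that for all $w\in\widetilde{W}$,
\begin{align*}
\widetilde{S}_e(E_D w, E_D w) \le C(1+c_*^{-1}c^*)\,\widetilde{S}_e(w,w).
\end{align*}

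First, I would localize the residual $r := E_D w - w$. Splitting patchwise, $r^{(k)}$ decomposes as a sum over edges $F^{(k\ell)}_e$, $\ell\in\iSet_{\mathcal{F}}^{(k)}$, of weighted jump contributions $\tau^{(k\ell)}$, with weights given by the counting function. Since $w$ is continuous at the primal vertices, each $\tau^{(k\ell)}$ vanishes at those vertex dofs and therefore belongs to the dual space $W_{\Delta}^{(k)}$. Applying hypothesis \eqref{HL:equ:abstractBDDC_1} to $r^{(k)}\in W^{(k)}$ and then invoking the edge decomposition \eqref{HL:equ:abstractBDDC_3} would yield
\begin{align*}
s_e^{(k)}(r^{(k)},r^{(k)}) \;\le\; c^* \sum_{\ell\in\iSet_{\mathcal{F}}^{(k)}} |\tau^{(k\ell)}|_{W^{(k\ell)}}^2.
\end{align*}

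Next, I would bound each edge term $|\tau^{(k\ell)}|_{W^{(k\ell)}}^2$ back by the $s_e$-energy of $w$ on the two adjacent patches. Since $\tau^{(k\ell)}$, viewed as an element of $W^{(\cdot)}$ on the appropriate neighbouring patch, lies in that patch's dual space (by the vanishing vertex values together with the fact that a function supported on a single edge has zero trace on the remaining edges), hypothesis \eqref{HL:equ:abstractBDDC_2} applied on that patch gives $|\tau^{(k\ell)}|_{W^{(k\ell)}}^2 \le c_*^{-1} s_e^{(\ell)}(\tau^{(k\ell)},\tau^{(k\ell)})$. A straightforward energy estimate, together with boundedness of the counting-function weights (trivial under the globally constant coefficient assumption), then yields $s_e^{(\ell)}(\tau^{(k\ell)},\tau^{(k\ell)}) \le C\bigl(s_e^{(k)}(w^{(k)},w^{(k)}) + s_e^{(\ell)}(w^{(\ell)},w^{(\ell)})\bigr)$. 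Summing over all patches $k$ and their interfaces, using $\widetilde{S}_e(w,w)=\sum_k s_e^{(k)}(w^{(k)},w^{(k)})$, and finally invoking the triangle inequality $\widetilde{S}_e(E_D w,E_D w)\lesssim \widetilde{S}_e(w,w)+\widetilde{S}_e(r,r)$ produces the claimed estimate with the prefactor $C(1+c_*^{-1}c^*)$.

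The hardest part will be the bookkeeping in the second step: I will need to identify carefully, for each edge-localized jump $\tau^{(k\ell)}$, the adjacent patch on which it lies in the dual space so that hypothesis \eqref{HL:equ:abstractBDDC_2} is applicable, and then verify that the counting-function weights combine into a constant independent of $h$ and $H$. Once this combinatorial step is under control, the rest reduces to routine energy estimates, and the pairing of \eqref{HL:equ:abstractBDDC_1} with \eqref{HL:equ:abstractBDDC_2} through \eqref{HL:equ:abstractBDDC_3} delivers exactly the product $c^*\cdot c_*^{-1}$ appearing in the final bound.
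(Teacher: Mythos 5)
Your overall skeleton (reduce to the norm of the averaging operator $E_D$ via the Mandel--Dohrmann--Tezaur characterization, localize the residual $r=E_Dw-w$ patchwise into edge jump contributions $\tau^{(k\ell)}$, use continuity at the primal vertices to place these in the dual spaces, and pair the two hypotheses through the edge decomposition) is indeed the standard route -- note the paper itself does not prove this theorem but invokes it from Theorem~6.1 of the cited BDDC--IgA reference. However, you apply the two hypotheses in the wrong directions. Hypothesis \eqref{HL:equ:abstractBDDC_1} bounds the \emph{seminorm} by the \emph{energy}, so it cannot yield your step $s_e^{(k)}(r^{(k)},r^{(k)})\leq c^*\sum_{\ell}|\tau^{(k\ell)}|_{W^{(k\ell)}}^2$; the estimate of the residual's energy by its seminorm is exactly \eqref{HL:equ:abstractBDDC_2}, and it is applicable precisely because $r^{(k)}\in W_{\Delta}^{(k)}$ (this is why that hypothesis is only assumed on the dual space). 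Symmetrically, \eqref{HL:equ:abstractBDDC_2} does not give $|\tau^{(k\ell)}|_{W^{(k\ell)}}^2\leq c_*^{-1}s_e^{(\ell)}(\tau^{(k\ell)},\tau^{(k\ell)})$; that direction is \eqref{HL:equ:abstractBDDC_1}. The correct chain is: $s_e^{(k)}(r^{(k)},r^{(k)})\leq c_*^{-1}|r^{(k)}|_{W^{(k)}}^2=c_*^{-1}\sum_{\ell}|\tau^{(k\ell)}|_{W^{(k\ell)}}^2$ by \eqref{HL:equ:abstractBDDC_2} and \eqref{HL:equ:abstractBDDC_3}; then bound each jump seminorm by the edge seminorms of $w^{(k)}$ and $w^{(\ell)}$ using the triangle inequality for $|\cdot|_{W^{(k\ell)}}$ together with the boundedness of the coefficient-scaling weights; reassemble with \eqref{HL:equ:abstractBDDC_3}; and only at the end apply \eqref{HL:equ:abstractBDDC_1} to $w^{(k)}$ and $w^{(\ell)}$ to return to the energies. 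This is what produces the factor $c_*^{-1}c^*$.

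The second, more serious gap is your fallback ``straightforward energy estimate'' $s_e^{(\ell)}(\tau^{(k\ell)},\tau^{(k\ell)})\leq C\bigl(s_e^{(k)}(w^{(k)},w^{(k)})+s_e^{(\ell)}(w^{(\ell)},w^{(\ell)})\bigr)$ with $C$ independent of $h$ and $H$: this begs the question. An edge-localized jump extended by zero to the remaining boundary is exactly the type of function whose (discrete harmonic) energy is \emph{not} controlled by the energy of $w$ with an $h$- and $H$-independent constant -- this is precisely where the $\log^2(H_k/h_k)$ and mesh-ratio factors enter $c_*^{-1}$ in Theorem~\ref{HL:thm:CondCoeffScal}. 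If such a direct bound held, hypotheses \eqref{HL:equ:abstractBDDC_1}--\eqref{HL:equ:abstractBDDC_2} would be superfluous and the final condition number bound would be uniform in $H/h$, which it is not. All comparisons between edge contributions must therefore be carried out in the boundary seminorm (where \eqref{HL:equ:abstractBDDC_3} makes them additive), and conversion back to the energy must go exclusively through the two hypotheses. With the roles of \eqref{HL:equ:abstractBDDC_1} and \eqref{HL:equ:abstractBDDC_2} interchanged as described, your outline can be completed into the standard proof.
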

Using this abstract framework, we obtain  the following condition number estimate for the BDDC preconditioner.
\begin{theorem}
\label{HL:thm:CondCoeffScal}
There exists a boundary seminorm such that the constants $c_*$ and $c^*$ of Theorem\,\ref{HL:thm:abstractBDDC} are bounded by 
 \begin{align*}
  c^* \leq C_1  \; \text{ and } \;
  c_*^{-1} \leq  C_2 \max_{\substack{1\leq k\leq N\\\ell\in\ifSet\sMP}}\left(\frac{h_\ell}{h_k}+\frac{h_\ell^2}{h_k^2}\right)^2 \max_{1\leq k\leq N} \left(1+\log^2\left(\frac{H_k}{h_k}\right)\right),
 \end{align*}
 where the constants $C_1$ and $C_2$ are independent of $H$ and $h$ . Therefore, the condition number of the isogeometric preconditioned BDDC operator is bounded by
 %
 \begin{align*}
  \kappa(M_{BDDC}^{-1}\widehat{S}) \leq C \max_{\substack{1\leq k\leq N\\\ell\in\ifSet\sMP}}\left(\frac{h_\ell}{h_k}+\frac{h_\ell^2}{h_k^2}\right)^2 \max_{1\leq k\leq N} \left(1+\log^2\left(\frac{H_k}{h_k}\right)\right),
 \end{align*}
 where the constant $C$ is independent of $H$ and $h$.
\end{theorem}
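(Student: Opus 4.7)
The plan is to verify the three hypotheses of Theorem \ref{HL:thm:abstractBDDC} with a judicious choice of boundary seminorm, and then read off the constants $c^*$ and $c_*$. I would define, for each edge $F^{(k\ell)}\subset \Gamma^{(k)}_e$, the edge seminorm
\begin{align*}
 |\g{w}^{(k)}|_{W^{(k\ell)}}^2 := H_k^{d-2}\Bigl(\tl \boldsymbol{b}|_{\hf^{(k\ell)}}\tl^2_{\hf^{(k\ell)}} + \frac{\delta}{\tth_{k\ell}}\sum_{i=1}^{M^{(\ell k)}} |b_i^{(k,\ell)}-\tilde b_i^{(k,\ell)}|^2 \tth_\ell\Bigr),
\end{align*}
where $\boldsymbol{b}$ are the coefficients of $w^{(k)}$ on $\Gamma^{(k)}_e$, and set $|\g{w}^{(k)}|_{W^{(k)}}^2 := \sum_{\ell\in\ifSet^{(k)}} |\g{w}^{(k)}|_{W^{(k\ell)}}^2$. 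By construction this sum structure gives \eqref{HL:equ:abstractBDDC_3} for free.

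For \eqref{HL:equ:abstractBDDC_1}, I would chain together the available equivalences for arbitrary $w\in W^{(k)}$: first, Corollary \ref{cor:equivalence_dGNorm_DHE} gives $s_e^{(k)}(w,w)\approx d^{(k)}(\mathcal{H}^{(k)}w,\mathcal{H}^{(k)}w)=\|\mathcal{H}^{(k)}w\|_{dG}^2$; next, the upper half of Proposition \ref{prop:equivalence_dg_disc} gives $H_k^{d-2}|\p{\mathcal{H}^{(k)}w}|_{dG}^2\leq C \|\mathcal{H}^{(k)}w\|_{dG}^2$; the coefficient equivalence \eqref{equ:discreteCoeff} turns this into a bound on $\tl \boldsymbol{C}_e\tl_{dG}^2$; finally, part~(2) of Theorem \ref{HL:thm:realMatProp} bounds $\tl \boldsymbol{C}_e\tl_{dG}^2$ from below by the edge seminorms. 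Since all edge coefficients of $\mathcal{H}^{(k)}w$ agree with those of $w$, the edge seminorm on the right is precisely $|\g{w}|_{W^{(k)}}^2/H_k^{d-2}$, and we obtain $c^*\leq C_1$ with $C_1$ independent of $h_k,H_k$.

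For \eqref{HL:equ:abstractBDDC_2}, take $w\in W_\Delta^{(k)}$, so its boundary coefficient vector $\boldsymbol{b}$ vanishes at all corners of $\Omega_e^{(k)}$. Again $s_e^{(k)}(w,w)\approx d^{(k)}(\mathcal{H}^{(k)}w,\mathcal{H}^{(k)}w)$, and the key observation is that $\mathcal{H}^{(k)}w$ minimizes $a^{(k)}(\cdot,\cdot)$ among all B-spline functions in $V_{h,e}^{(k)}$ with boundary trace $w$, while the jump contribution to $d^{(k)}$ depends only on boundary values; hence $d^{(k)}(\mathcal{H}^{(k)}w,\mathcal{H}^{(k)}w)\leq d^{(k)}(\tilde u,\tilde u)$ for any admissible competitor $\tilde u$ with the same boundary data. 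I would choose $\tilde u$ to be the B-spline whose coefficient matrix is exactly the piecewise-bilinear discrete harmonic lift $\boldsymbol{H}_e^{(k)}(\boldsymbol{b})$. Then the lower half of Proposition \ref{prop:equivalence_dg_disc} together with \eqref{equ:discreteCoeff} gives $\|\tilde u\|_{dG}^2 \leq C\qh^{(k)}H_k^{d-2} \tl \boldsymbol{H}_e^{(k)}(\boldsymbol{b})\tl_{dG}^2$, and part~(1) of Theorem \ref{HL:thm:realMatProp} is now applicable because $\boldsymbol{b}$ vanishes at the twelve corner components. Using \eqref{equ:h/H} to replace $\tth_k$ by $h_k/H_k$ and $\tqh^{(k)}$ by $\qh^{(k)}$ yields
\begin{align*}
 s_e^{(k)}(w,w) \leq C\, (\qh^{(k)})^2 \bigl(1+\log^2(H_k/h_k)\bigr)\,|\g{w}^{(k)}|_{W^{(k)}}^2,
\end{align*}
so $c_*^{-1}$ is bounded by the claimed quantity. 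Plugging these two bounds into Theorem \ref{HL:thm:abstractBDDC} yields the condition number estimate.

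The main technical obstacle is the upper bound step: the B-spline discrete harmonic extension $\mathcal{H}^{(k)}w$ cannot be analysed directly on the discrete coefficient level, and the substitute competitor $\tilde u$ whose control net equals the bilinear harmonic extension is only admissible thanks to the energy-minimisation property of $\mathcal{H}^{(k)}$ together with the fact that the $d^{(k)}$-jump terms depend only on boundary data. Getting the two factors of $\qh$ to match the statement (one from Proposition \ref{prop:equivalence_dg_disc}, one from Theorem \ref{HL:thm:realMatProp}(1)) and correctly identifying $\tqh^{(k)}$ with $\qh^{(k)}$ via \eqref{equ:h/H} is the place where one has to be most careful.
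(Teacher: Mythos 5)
Your proposal follows essentially the same route as the paper's proof: the same edge seminorm (the paper additionally includes the restricted gradient term $|\g{w}^{(k)}|_{F^{(k\ell)}}|_{\nabla}^2$ on each edge, which is harmless since in the upper bound it is simply absorbed into $\tl\cdot\tl_{dG}^2$ and is not needed for the three stated hypotheses), the same chain through Corollary~\ref{cor:equivalence_dGNorm_DHE}, Proposition~\ref{prop:equivalence_dg_disc}, \eqref{equ:discreteCoeff} and both parts of Theorem~\ref{HL:thm:realMatProp}, and the same competitor for the lower bound, namely the spline whose control net is the lifted bilinear harmonic extension $\boldsymbol{H}_e^{(k)}(\boldsymbol{b})$, admissible by the energy-minimisation property. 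The bookkeeping of the two factors of $\qh$ and the identification $\tqh\approx\qh$ via \eqref{equ:h/H} match the paper exactly.
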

\begin{proof}
 The first step is to appropriately define the seminorm $|\cdot|_{W^{(k)}}^2$ in $W^{(k)}$:
 \begin{align*}
 \begin{split}
  |\g{w}^{(k)}|_{W^{(k)}}^2&:= \sum_{\ell\in\ifSet\sMP}|\g{w}^{(k)}|_{F^{(k\ell)}_e}|_{W^{(k\ell)}}^2,\\
  |\g{w}^{(k)}|_{F\sMP[k\ell]_e}|_{W^{(k\ell)}}^2 &:=
                        H^{d-2}_k\biggl(\tl\g{w}^{(k)}|_{F\sMP[k\ell]}\tl_{F\sMP[k\ell]}^2 
                     + |\g{w}\sMP|_{F\sMP[k\ell]}|_{\nabla}^2
                     +  \frac{\delta }{\tth_{k\ell}}|\g{w}\sMP[k,\ell]-\pi_{F\sMP[\ell k]}\g{w}\sMP_{|F\sMP[k\ell]}|_\square^2\biggr),\\
  \end{split}
  \end{align*}
  where $|\g{w}\sMP|_{F\sMP[k\ell]}|_{\nabla}^2$ has to be understood as the restriction of the discrete seminorm to $F\sMP[k\ell]$, cf., Definition~\ref{def:discrete_norm}, this essentially gives the differences along $F\sMP[k\ell]$. Furthermore, we define 
$\tl \g{w}^{(k)}|_{F\sMP[k\ell]} \tl_{F\sMP[k\ell]}:= \tl\mathbf{c}\tl_{F\sMP[k\ell]}$,
where $\mathbf{c}$  are the values $(c_i^w)_{i\in \iSet(F\sMP[k\ell])}$ written as a vector.

Given $\g{w}\sMP\in W\sMP$ 
we define its NURBS harmonic extension by  $\g{u}\sMP=\mathcal{H}_e\sMP(\g{w}\sMP)$ with coefficients $\boldsymbol C_{e}\sMP:=\{\boldsymbol C\sMP[k,k], \{\boldsymbol c\sMP[k,\ell]\}_{\ell\in\ifSet\sMP}\}$. Consider a single edge $F\sMP[k\ell]$, since $\tl \g{w}^{(k)}|_{F\sMP[k\ell]}\tl_{F\sMP[k\ell]}^2 = \tl \boldsymbol C|_{\hf\sMP[k\ell]}\sMP[k,k]  \tl^2_{\hf\sMP[k\ell]}$ and $|\g{w}\sMP[k,\ell]-\pi_{F\sMP[\ell k]}\g{w}\sMP_{|F\sMP[k\ell]}|_\square^2 = \sum_{i\in\iSet\sMP[k,\ell]}|c_i\sMP[k,\ell]-\tilde{c}_i\sMP[k,\ell]|^2 \hh_\ell$ we can estimate 
\begin{align*}
 \tl \g{w}^{(k)}|_{F\sMP[k\ell]}\tl_{F\sMP[k\ell]}^2 + 
 \frac{\delta }{\tth_{k\ell}}|\g{w}\sMP[k,\ell]-\pi_{F\sMP[\ell k]}\g{w}\sMP_{|F\sMP[k\ell]}|_\square^2 \leq C \tl \boldsymbol C\sMP_e \tl_{dG}^2,
\end{align*}
by means of Theorem~\ref{HL:thm:realMatProp}(b) and \eqref{equ:h/H}. Moreover, the second term of $| \cdot |^2_{W^{(k\ell)}}$ is a part of $\tl \cdot \tl_{dG}^2$, hence, 
we obtain the inequality
\begin{align*}
 |\g{w}^{(k)}|_{F\sMP[k\ell]}|_{W^{(k\ell)}}^2 \leq C H^{d-2}_k\tl \boldsymbol C\sMP_e \tl_{dG}^2.
\end{align*}
By means of Proposition~\ref{prop:equivalence_dg_disc} it follows that
\begin{align*}
 |\g{w}^{(k)}|_{F\sMP[k\ell]}|_{W^{(k\ell)}}^2 \leq C H^{d-2}_k\tl \boldsymbol C\sMP_e \tl_{dG}^2 \leq C H^{d-2}_k|\p u\sMP|_{dG}^2\leq C \|\g{u}\sMP\|^2_{dG}.
\end{align*}
Using Lemma~\ref{lem:equDiscHarmonic} and Corollary~\ref{cor:equivalence_dGNorm_DHE} we can estimate 
\begin{align*}
 \|\g{u}\sMP\|^2_{dG} =\|\mathcal{H}_e\sMP(\g{w}\sMP)\|^2_{dG} \leq C \|\mathcal{H}\sMP(\g{w}\sMP)\|^2_{dG} &\leq C a_e\sMP(\mathcal{H}_e\sMP(\g{w}\sMP),\mathcal{H}_e\sMP(\g{w}\sMP)),
 \\&= C s_e\sMP(\g{w}\sMP,\g{w}\sMP)
\end{align*}
and we arrive at $|\g{w}^{(k)}|_{e}|_{W^{(k\ell)}}^2\leq C s_e\sMP(\g{w}\sMP,\g{w}\sMP)$. Since this estimate holds for the four edges of the patch, we obtain
\begin{align*}
 |\g{w}^{(k)}|_{W^{(k)}}^2 \leq C s_e\sMP(\g{w}\sMP,\g{w}\sMP) \quad \forall \g{w}\in W\sMP,
\end{align*}
where the constant $C$ is independent of $h_k$ and $H_k$, which proves the upper bound.

For the lower bound, let be $\g{w}\sMP\in W\sMP_{\Delta}$ and $ w\sMP$ its representation in the parameter domain.  We apply the lifting operator $\boldsymbol{H}_e\sMP$ to its coefficient representation $(c_i^w)_{i\in \mathcal{I}(\Gamma_e^{(k)})}$, and obtain a matrix $\boldsymbol{H}_e^{(k)}(\g w^{(k)})$ with entries $ ({c_i^H}^{(k)})_{i\in\mathcal{I}^{(k)}}$. According to (\ref{equ:coeff_dgSpline}) these entries define a spline function 
  $\g{u}^{(k)} := \{\g{u}\sMP[k,k],\{\g{u}\sMP[k,\ell]\}_{\ell\in\ifSet\sMP}\}$. We observe the estimate
  \begin{align}
  \label{equ:proof_cond_ba}
  \begin{split}
   \qh\sMP H^{d-2}_k\tl \boldsymbol{H}_e\sMP(\g w\sMP) \tl^2_{dG}& \geq C\qh\sMP H^{d-2}_k |\p u\sMP|_{dG}^2 \geq C  \|\g{u}\sMP\|^2_{dG} \geq C a_e\sMP(\g{u}\sMP,\g{u}\sMP) \\
   &\geq C a_e\sMP(\mathcal{H}_e\sMP(\g{w}\sMP),\mathcal{H}\sMP_e(\g{w}\sMP)) =C s_e\sMP(\g{w}\sMP,\g{w}\sMP),
  \end{split}
  \end{align} 
where we used inequality (\ref{equ:discrete_dg_lower}), (\ref{equ:discreteCoeff}), Lemma~\ref{lem:wellPosedDg} and the fact that $\mathcal{H}\sMP_e(\g{w}\sMP)$ minimizes the energy among given boundary data $\g{w}\sMP$. By means of Theorem~\ref{HL:thm:realMatProp}(a), we can estimate
\begin{align}
 \label{equ:proof_cond_bb}
\begin{split}
 \tl \boldsymbol{H}_e\sMP(\g w\sMP) \tl^2_{dG} &\leq C \tqh\sMP(1+\log^2\tth_k^{-1})  \sum_{\ell\in\ifSet\sMP}\biggl(\tl \g{w}^{(k)}|_{F\sMP[k\ell]} \tl_{F\sMP[k\ell]}^2 + \frac{\delta}{\tth_{k\ell}}\sum_{i=1}^{M\sMP[\ell k]}|c_i\sMP[k,\ell]-\tilde{c}_i\sMP[k,\ell]|^2 \tth_\ell \biggr)\\
 &\leq  C \tqh\sMP (1+\log^2\tth_k^{-1}) H^{2-d}_k |\g{w}\sMP|^2_{W\sMP}.
 \end{split}
\end{align}
Combining (\ref{equ:proof_cond_ba}) and (\ref{equ:proof_cond_bb}) gives 
\begin{align*}
 s_e\sMP(\g{w}\sMP,\g{w}\sMP) \leq C \qh\sMP\tqh\sMP (1+\log^2\tth_k^{-1}) |\g{w}\sMP|^2_{W\sMP}
\end{align*}

Due to (\ref{equ:h/H}), we have $\tth_k\approx h_k/H_k$, and since $H_k\approx H_\ell$ we obtain $\tqh\approx \qh$. Taking the maximum over all patches proves the upper bound. By applying Theorem~\ref{HL:thm:abstractBDDC}, the condition number bound follows.
\qed
\end{proof}

Theorem~\ref{HL:thm:CondCoeffScal} provides the theoretical basis for the numerical results obtained in \cite{HL:HoferLanger:2016a} for the two-dimensional case with only vertex primal variables. The numerical results indicate that this bound also holds for continuous edge averages as primal variables and for three-dimensional problems with additional interface or edge averages. Although the presented proof does not cover the case of jumping diffusion coefficients, we  also observed  robustness of the condition number in such cases in \cite{HL:HoferLanger:2016a}. Note that the condition number bound obtained in Theorem~\ref{HL:thm:CondCoeffScal} depends on the ratio $h_\ell/h_k$. However, numerical results do not reproduce this behaviour, cf., Section\,4.3 in \cite{HL:HoferLanger:2016a}. 
We point out that the presented analysis does not answer the dependence on the B-Spline degree $p$. The numerical experiments presented in \cite{HL:HoferLanger:2016a} indicate that the condition number depends on the degree, but very moderately in a logarithmic way in 2d. In 3d we observe a linear dependence. Similar results have been obtained in \cite{HL:VeigaChoPavarinoScacchi:2012a} for the cG BDDC-IgA preconditioner, see also \cite{HL:HoferLanger:2016b} for the cG-IETI-DP method.
Despite its not explicitly highlighted in this paper, the condition number bound proved here is not independent of the dG parameter $\delta$, in contrast to the bound given in \cite{HL:DryjaGalvisSarkis:2013a} for the FE equivalent. The dG parameter is contained in the constant appearing in Theorem~\ref{HL:thm:realMatProp}(a) and in \eqref{equ:discrete_dg_lower}. In the numerical experiments $\delta$ is chosen to be $(p+1)(p+d)$, where $d$ is the dimension. Hence, the influence of $\delta$ on the algorithm is implicitly contained in the experiments about the $p$-dependence.    
\section{Conclusion}
\label{sec:conclusion}
In this paper, we have considered non-overlapping domain decomposition methods based on the tearing and interconnecting strategy for IgA in combination with dG on the patch interfaces. We have shown that  the condition number of the preconditioned linear system obtained by the dG-IETI-DP method and the corresponding BDDC method behave quasi-optimal with respect to  $H/h:=\max_k(H_k/h_k)$. The analysis was done for the two-dimensional case having only vertex primal variables and homogeneous diffusion coefficients. The extension to 3d and other primal variables is certainly possible, but even more technical. Numerical examples presented in \cite{HL:HoferLanger:2016a} confirm the quasi optimal condition number bound obtained here. Note, the bound in Theorem~\ref{HL:thm:CondCoeffScal} depends on the ratio of the neighbouring mesh sizes $h_\ell/h_k$. However, the numerical examples in \cite{HL:HoferLanger:2016a} indicate that the condition number is also independent of $h_\ell/h_k$. 

\section*{Acknowledgements}
This work was supported by the Austrian Science Fund (FWF) under the grant W1214, project DK4. This support is gratefully acknowledged. Moreover, the author wants to thank Prof. Ulrich Langer for the valuable comments and support during the preparation of the paper.

\bibliographystyle{abbrv}

\bibliography{dgIETI_analysis.bib}

\end{document}